\newtheorem{theorem}{Theorem}%[section]
\newtheorem{conjecture}[theorem]{Conjecture}
\newtheorem{corollary}[theorem]{Corollary}
\newtheorem{lemma}[theorem]{Lemma}
\newtheorem{observation}[theorem]{Observation}
\newcommand{\ZZ}{\mathbb{Z}}
\newcommand{\scon}{\mathcal S}
\newcommand{\pcon}{\mathcal P}
\newcommand{\Setx}[1]{\left\{#1\right\}}
\newcommand{\depth}[1]{\mathop{\mathrm{dp}}(#1)}
\newcommand{\fig}[1]{\includegraphics[page=#1]{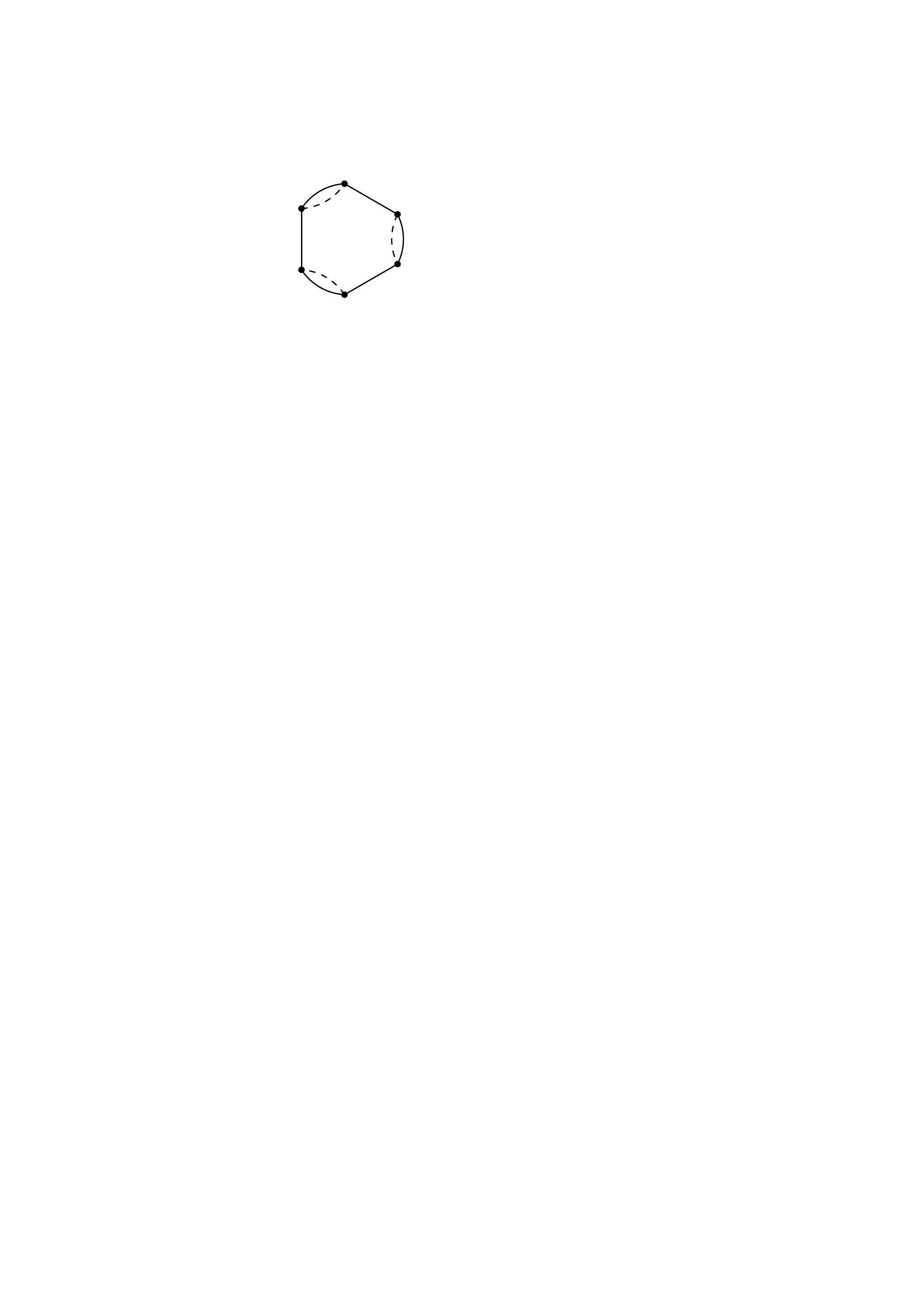}}
\newcommand{\hf}{\hspace*{0mm}\hspace{\fill}\hspace*{0mm}}
\title{\textbf{Nowhere-zero flows in signed series-parallel graphs}}
\author{%
  Tom\'{a}\v{s} Kaiser$\,^{1,2}$%
  \and Edita Rollov\'{a}$\,^{1,3}$%
}%
\date{}
\begin{document}
\maketitle

\footnotetext[1]{Department of Mathematics and European Centre of Excellence
  NTIS---New Technologies for Information Society, University of West
  Bohemia, Univerzitn\'{\i}~8, 306~14~Plze\v{n}, Czech
  Republic. Supported by project GA14-19503S of the Czech Science
  Foundation.}%
\footnotetext[2]{Affiliated with the Institute for Theoretical Computer
  Science (CE-ITI). E-mail: \texttt{kaisert@kma.zcu.cz}.}%
\footnotetext[3]{Supported by the project NEXLIZ --- CZ.1.07/2.3.00/30.0038,
  which is co-financed by the European Social Fund and the state
  budget of the Czech Republic. Partially supported by APVV, Project
  0223-10 (Slovakia). E-mail: \texttt{rollova@kma.zcu.cz}.}

\begin{abstract}
  Bouchet conjectured in 1983 that each signed graph {that} admits a
  nowhere-zero flow has a nowhere-zero $6$-flow. We prove that the
  conjecture is true for all signed series-parallel graphs. Unlike the
  unsigned case, the restriction to series-parallel graphs is
  nontrivial; in fact, the result is tight for infinitely many graphs.
\end{abstract}

\section{Introduction}
\label{sec:introduction}

A \emph{signed graph} $(G,\sigma)$ is a graph $G$ together with a
\emph{signature}, a mapping $\sigma:\ E(G) \to \{+1,-1\}$, that
assigns each edge with a sign. The graph $G$ is called the
\emph{underlying graph} of $(G,\sigma)$. In this work we focus on
signed graphs whose underlying graph is a \emph{series-parallel
  graph}, that is, a graph that can be obtained from copies of $K_2$
by iterated series and parallel connections.

A signed graph can be given an \emph{orientation} as follows. Viewing
each edge as composed of two half-edges, we orient each half-edge
independently; it is required that of the two half-edges of an edge
$e$, exactly one points to its endvertex if $e$ is positive, while
none or both of them point to their endvertices if $e$ is negative. A
\emph{nowhere-zero $k$-flow} $(D,\phi)$ on a signed graph $(G,\sigma)$
is an orientation $D$ of edges of $(G,\sigma)$ and a valuation $\phi$
of its arcs by non-zero integers whose absolute value is smaller than
$k$, such that for every vertex the sum of the incoming values (the
\emph{inflow}) is equal to the sum of the outgoing ones (the
\emph{outflow}). Graphs (signed or unsigned) admitting at least one
nowhere-zero {$k$-flow (for some $k$)} are called
\emph{flow-admissible}.

With respect to nowhere-zero flows, all-positive signed graphs (i.e.,
those with all edge signs positive) behave like ordinary unsigned
graphs. Thus, problems about nowhere-zero flows in signed graphs
include the celebrated 5-flow conjecture of Tutte~\cite{T54}:
\begin{conjecture}[Tutte]\label{conj:5-flow}
  Every {flow-admissible graph} has a nowhere-zero
  $5$-flow.
\end{conjecture}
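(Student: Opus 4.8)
The plan is to peel the conjecture down to its single hardest case and then attack that case over a finite group. First I would use Tutte's theorem that a graph has a nowhere-zero integer $k$-flow if and only if it has a nowhere-zero $\ZZ_k$-flow; since for unsigned graphs flow-admissibility is the same as being bridgeless, the target becomes the assertion that every bridgeless graph carries a nowhere-zero $\ZZ_5$-flow. Because the flow condition is unchanged by subdividing or suppressing degree-two vertices, and because a vertex of degree at least four can be split (preserving bridgelessness by a Fleischner-type argument) while any nontrivial edge-cut of size at most three lets one glue $5$-flows on the two sides, I can reduce to cyclically $4$-edge-connected cubic graphs; short cycles are eliminated by standard girth reductions. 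Finally, a cubic graph has a nowhere-zero $4$-flow exactly when it is $3$-edge-colourable, and a $4$-flow is a fortiori a $5$-flow, so the whole problem collapses onto the class of \emph{snarks}: cyclically $4$-edge-connected cubic graphs of girth at least $5$ and chromatic index $4$.

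On a snark I would try to manufacture the $\ZZ_5$-flow from combinatorial scaffolding, pursuing two complementary routes. The first starts from Seymour's nowhere-zero $6$-flow, which may be read as a pair consisting of a $\ZZ_2$-flow and a $\ZZ_3$-flow; the aim is to re-route the orientation on the even subgraph supporting the $\ZZ_2$-part so as to absorb one of the six residues and fold $\ZZ_6$ down to $\ZZ_5$. The second route analyses the flow directly through covers of $E(G)$ by even subgraphs (circuits), which places the question in the same orbit as the cycle double cover conjecture and lets one import the developed structure theory of circuit covers of cubic graphs.

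The crux---and the reason the statement remains open---is precisely the final descent from $6$ to $5$: no reduction is known that lowers the flow number by one while respecting the rigid connectivity and girth of a snark, and the extremal example forbids any shortcut, since the Petersen graph has flow number exactly $5$ and therefore admits no $4$-flow at all. Consequently any successful argument must halt \emph{exactly} at $5$, which I expect will demand either a new minor-type obstruction for flows---an analogue of the role the Petersen graph plays in the theorem of Robertson, Sanders, Seymour and Thomas on $4$-flows of cubic graphs---or a global discharging argument on the even-subgraph structure of the snark. Pinning down the right invariant that forces the $6$-flow down to a $5$-flow is the main obstacle of the entire plan.
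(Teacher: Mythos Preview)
The statement you are attempting is Tutte's $5$-flow conjecture, and the paper does not prove it: it is stated as Conjecture~\ref{conj:5-flow}, attributed to Tutte, and used only as background motivation. There is therefore no ``paper's own proof'' to compare against.

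Your proposal is not a proof either, and you say so yourself. The reductions you outline (passing to $\ZZ_5$-flows, reducing to cyclically $4$-edge-connected cubic graphs of girth at least $5$, and observing that $3$-edge-colourable cubic graphs have $4$-flows) are all standard and correct, and they do isolate snarks as the hard case. But the two ``routes'' you then sketch are not arguments: the idea of ``folding $\ZZ_6$ down to $\ZZ_5$'' by re-routing the $\ZZ_2$-part of Seymour's $6$-flow has no mechanism behind it, and invoking circuit covers merely restates the difficulty in different language. You explicitly identify the crux---the descent from $6$ to $5$ on a snark---as unresolved, which means the proposal terminates exactly where the conjecture begins. A proof plan whose final step is ``find the right invariant'' is a research programme, not a proof.
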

While there are examples showing that the analogue of
Conjecture~\ref{conj:5-flow} is false for {general} signed graphs (as discussed
below), Bouchet~\cite{Bou} conjectured that things do not get much
worse:
\begin{conjecture}[Bouchet]\label{conj:bouchet}
  Every flow-admissible signed graph has a nowhere-zero $6$-flow.
\end{conjecture}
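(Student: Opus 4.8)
The plan is to attack Bouchet's conjecture by adapting the integer-flow decomposition behind Seymour's $6$-flow theorem for ordinary graphs, where the isomorphism $\ZZ_6 \cong \ZZ_2 \oplus \ZZ_3$ lets one build a nowhere-zero $6$-flow by superimposing a $2$-flow on a suitable subgraph and a $3$-flow elsewhere. The first step is to reduce to a tractable base case: using standard flow-preserving operations---series reduction and vertex splitting---one may assume the underlying graph is connected and cubic. These operations must be executed with care for signed graphs, since suppressing or contracting edges can alter the signature and the balance of cycles (the sign of a suppressed path being the product of its edge signs), so I would track the balance class of every cycle through each step.

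The heart of the matter is the unbalanced structure. In a flow-admissible signed graph every edge lies in a signed circuit---a balanced cycle, or a barbell formed by two unbalanced cycles joined by a path or at a common vertex---and it is the forced pairing of unbalanced cycles that makes the signed conjecture strictly harder than its unsigned counterpart, as witnessed by graphs for which $6$ is best possible. I would introduce a parity invariant, akin to a $T$-join supported on the negative edges, and try to show that the $\ZZ_2$-component of the flow can always be chosen so as to cancel this obstruction, after which the residual problem behaves like the all-positive case and admits a $\ZZ_3$-flow by Seymour-type arguments. A subtlety absent in the unsigned world is that for signed graphs nowhere-zero integer $k$-flows and $\ZZ_k$-flows are genuinely inequivalent, so the group-level decomposition would have to be followed by an integer-realisation step lifting the combined $\ZZ_6$-flow to an honest $6$-flow.

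The hard part will be controlling the interaction between the $\ZZ_2$-component and the balance structure in graphs of low edge-connectivity, where the pairing of unbalanced cycles cannot be localised and the two flow components interfere. The partial results in the literature---Bouchet's original $216$-flow bound, Z\'ykas reduction to $30$, and the verification of the $6$-flow bound under strong edge-connectivity hypotheses---all single out the sparsely connected, heavily unbalanced regime as the seat of the difficulty. I would therefore expect the proof to require a genuinely new structural decomposition of flow-admissible signed graphs that simultaneously controls small edge cuts and the global distribution of negative edges; constructing such a decomposition, rather than the flow-combination step itself, is where I anticipate the argument will stall. This is consistent with the conjecture remaining open in full generality, the present paper securing only the series-parallel case.
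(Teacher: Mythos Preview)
The statement you are attempting is Conjecture~\ref{conj:bouchet}, which is \emph{open}; the paper does not prove it and explicitly restricts attention to the series-parallel case (Theorem~\ref{t:main}). What you have written is not a proof but a research outline, and you yourself concede at the end that the argument ``will stall'' and that the conjecture ``remain[s] open in full generality.'' So there is no proof here to assess against the paper's, because neither you nor the paper supplies one.

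On the technical side, the specific strategy you sketch has known obstructions that go beyond ``the hard part will be controlling the interaction.'' The Seymour-style $\ZZ_2\oplus\ZZ_3$ decomposition relies on the equivalence, for unsigned graphs, between nowhere-zero $\ZZ_k$-flows and nowhere-zero integer $k$-flows; as you note, this equivalence fails for signed graphs, and there is no general lifting lemma that would convert a $\ZZ_6$-flow into an integer $6$-flow. Thus even if the $\ZZ_2$- and $\ZZ_3$-components could be constructed, your ``integer-realisation step'' is not a routine afterthought but an open problem in its own right. Moreover, the reduction to cubic graphs via vertex splitting does not preserve flow-admissibility in the signed setting in the same clean way it does for unsigned graphs: splitting a vertex can destroy the barbells through an edge, so the reduction itself needs justification you have not supplied. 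Finally, the ``parity invariant akin to a $T$-join'' is never defined, and without a precise object there is nothing to verify. In short, every step of your plan is either undefined, known to be problematic, or explicitly left as a conjecture---which is why Bouchet's conjecture is still a conjecture.
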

The best published partial result in the direction of Bouchet's
conjecture is a 30-flow theorem by Z\'yka~\cite{Zyka} from
1987. Recently, a 12-flow theorem was announced by DeVos~\cite{DeVos}.

An infinite family of signed graphs reaching the bound stated in
Conjecture~\ref{conj:bouchet} was found by Schubert and
Steffen~\cite{SS}. The smallest member of the family is shown in
Figure~\ref{fig:lower}. Interestingly, the underlying graphs of the
members of this family are series-parallel. This is in sharp contrast
with the situation in the unsigned case, where each flow-admissible
series-parallel graph trivially admits a nowhere-zero 3-flow. In this
paper, we concentrate on the family of signed series-parallel graphs
and prove the corresponding restriction of Bouchet's conjecture:
\begin{theorem}\label{t:main}
  Every flow-admissible signed series-parallel graph has a
  nowhere-zero $6$-flow.
\end{theorem}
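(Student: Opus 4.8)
The plan is to argue by structural induction over a series--parallel decomposition of $G$. First I would reduce to the case of a two-terminal series--parallel network. One may delete positive loops, since they affect neither flow-admissibility nor the existence of a nowhere-zero $6$-flow; then a short argument over the block--cutvertex structure of $G$ reduces the problem to a single block. Here flow-admissibility of $G$ is used through the fact that it forces every bridge to have an unbalanced cycle in each of the two components obtained by deleting it, which is exactly what is needed to produce a partial flow that is nonzero on the bridge and can be matched up at the cutvertices. Each remaining block is $2$-connected, hence arises from single edges by iterated series and parallel compositions, so it suffices to control, for each such network $H$ with terminals $s$ and $t$, which nowhere-zero flows on $H$ --- with values in $\Setx{-5,\dots,5}$ and conservative at every vertex other than $s$ and $t$ --- are possible.

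I would record this in the \emph{flow profile} $P(H)\subseteq\ZZ^2$: the set of pairs $(a,b)$ for which $H$ has such a flow with excess $a$ at $s$ and excess $b$ at $t$, where the excess at a terminal is the signed sum of the flow values over its half-edges. One always has $a\equiv b\pmod 2$, because $a+b$ equals twice the signed sum of the flow values over the negative edges. The base cases are immediate: a single positive edge has profile $\Set{(-j,j)}{1\le\size{j}\le 5}$ and a single negative edge has profile $\Set{(j,j)}{1\le\size{j}\le 5}$. The composition rules are mechanical: a parallel composition yields the Minkowski sum $P(H_1)+P(H_2)$, while a series composition (identifying the second terminal of $H_1$ with the first terminal of $H_2$) yields $\Set{(a,d)}{(a,b)\in P(H_1)\text{ and }(-b,d)\in P(H_2)}$. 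Carrying flow-admissibility of $G$ through these reductions, the theorem comes down to showing that the profile of each block contains the element needed to close it up: $(0,0)$ if the block is self-contained, or an element with a prescribed nonzero even coordinate at the relevant terminal when the block must absorb flow coming across a bridge.

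The core of the proof is then a lemma asserting that this closing element is always present --- equivalently, an invariant on flow profiles, preserved by the two composition rules, that certifies it; I would try to phrase the invariant as the statement that $P(H)$ always contains an element of the appropriate parity whose coordinates are small. Parallel composition only enlarges profiles, so all the work lies in controlling how a series composition can narrow one. The expected main obstacle is that this control has to be \emph{tight}: the value $5$ can genuinely be forced in a flow-admissible series--parallel graph --- this is precisely what the Schubert--Steffen family exhibits --- so the invariant must be slack enough to tolerate $5$ but rigid enough to forbid $6$. Concretely, when many parallel classes (in the extreme case, unbalanced ``negative digons'') are strung together in series, each series step shifts and shrinks the window of attainable terminal excesses, and one must show that flow-admissibility keeps this window from collapsing past the point where a nonzero value of absolute value at most $5$ remains available; keeping track of how these shifts interact with the parity constraint $a\equiv b$, and with the negative loops that series reductions can create, is the technical heart of the argument.
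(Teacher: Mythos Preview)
Your outline sets up the natural machinery --- flow profiles $P(H)$ and their behaviour under series and parallel composition --- but the proof is not there: the decisive step, the invariant on $P(H)$ that survives series composition and guarantees the needed closing element, is never stated, let alone verified. You write that you ``would try to phrase the invariant'' and then describe the obstacle rather than overcoming it. That obstacle is real. Under series composition the profile is a fibred product, and a long chain of unbalanced digons (your ``negative digons'') genuinely shrinks the set of attainable terminal excesses in a way that depends on parity and on how many digons there are; an invariant of the vague form ``$P(H)$ contains a small element of the right parity'' does not propagate through such chains without further structure. Until you exhibit a concrete invariant and check both composition rules against it, this is a plan, not a proof.

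The paper sidesteps exactly this difficulty by \emph{not} attempting a direct induction over the SP decomposition. Instead it takes a minimum counterexample, reduces it (no degree-$2$ non-terminals, no parallel edges of equal sign), and then shows that any such reduced graph of depth $\geq 3$ contains a specific piece called a \emph{necklace}: a parallel connection of two ``strings'' of unbalanced digons. The pseudoflow analysis is carried out only for strings and necklaces (Lemmas~\ref{l:string} and~\ref{l:necklace}, the latter via an explicit table of $(a_1,b_1),(a_2,b_2)$ decompositions), not for arbitrary SP networks. The necklace is then replaced by a small gadget $\scon(K_2^+,D,K_2^+)$; a separate lemma shows the resulting graph stays flow-admissible, and minimality gives a $6$-flow that is transferred back via the necklace pseudoflow. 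In effect the paper trades your global invariant for a local replacement argument, so that the delicate control you describe only has to be done for one explicit family of pieces.
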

The proof is given in Section~\ref{sec:proof}.

\begin{figure}
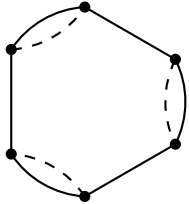

  \centering
  \fig1
  \caption{A signed series-parallel graph with flow number 6. }
  \label{fig:lower}
\end{figure}

%.....................................................................

\section{Preliminaries}

In this section, we review the necessary terminology. Our graphs may
contain parallel edges. The \emph{switching at a vertex $v$} of a
signed graph is the operation of inverting the signs on all the edges
incident with $v$. Two signed graphs are \emph{switching equivalent}
if one can be obtained from the other by a finite sequence of
switchings. Since switching does not affect the existence of a
nowhere-zero $k$-flow (for any $k$), we may treat switching equivalent
graphs as identical.

It is well known that an unsigned graph is flow-admissible if and only
if it is bridgeless. Before we state a corresponding characterisation
for signed graphs due to Bouchet~\cite{Bou}, we recall several basic
notions. A \emph{balanced cycle} is a cycle with an even number of
negative edges. An \emph{unbalanced cycle} is a cycle with an odd
number of negative edges. A signed graph is called \emph{unbalanced}
if it contains an unbalanced cycle. Otherwise it is \emph{balanced}. A
\emph{barbell} in a signed graph $G$ is the union of two edge-disjoint
unbalanced cycles $C_1$, $C_2$ and a path $P$ satisfying one of the
following properties:
\begin{itemize}
\item $C_1$ and $C_2$ are vertex-disjoint, $P$ is internally
  vertex-disjoint from $C_1\cup C_2$ and shares an endvertex with each
  $C_i$, or
\item $V(C_1)\cap V(C_2)$ consists of a single vertex $w$, and $P$ is
  the trivial path consisting of $w$.
\end{itemize}
A \emph{signed circuit} in $G$ is either a balanced cycle or a barbell
in $G$. With respect to flows, signed circuits are analogous to cycles
in unsigned graphs. The following characterisation theorem is due to
Bouchet~\cite[Proposition~3.1]{Bou}:

\begin{theorem}\label{t:admissible}
  A signed graph $G$ is flow-admissible if and only if each of its
  edges is contained in a signed circuit.
\end{theorem}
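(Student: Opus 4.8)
The statement has two directions, which I would handle separately. For necessity, suppose $(G,\sigma)$ has a nowhere-zero $k$-flow $(D,\phi)$. Fixing a reference orientation of $(G,\sigma)$, the plan is to treat flows as (rational) vectors in the kernel of the incidence operator — so that a difference of two flows is again a flow — and to decompose $\phi$ into flows carried by signed circuits. First I would record two facts: (i) the support of any flow $\psi$ is bridgeless, since summing the conservation conditions over the vertices on one side of a bridge $g$ of $\mathrm{supp}(\psi)$ forces $\psi(g)=0$; and (ii) every signed circuit $S$ carries a canonical flow $f_S$ supported exactly on $E(S)$ with $\lvert f_S(g)\rvert\le 2$ for all $g$ — for a balanced cycle this is the usual $\pm1$ circulation after switching it to be all-positive, while for a barbell $C_1\cup P\cup C_2$ one pushes a unit around each $C_i$, which by unbalancedness leaves a surplus of magnitude $2$ at one vertex of each $C_i$, and routes that surplus along $P$, giving values $\pm1$ on the cycles and $\pm2$ on $P$.

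The core step is the lemma that \emph{every bridgeless signed graph $H$ admitting a nowhere-zero flow contains a signed circuit}, which I would prove by contradiction. If $H$ had no signed circuit it would have no balanced cycle, so every cycle of $H$ would be unbalanced. If two distinct cycles $C,C'$ shared an edge, then $C\triangle C'$ would be a nonempty disjoint union of cycles with an even total number of negative edges, and hence would contain a balanced cycle, or else two edge-disjoint unbalanced cycles, which a shortest path between them in the connected graph $H$ completes to a barbell — either outcome is impossible. Thus the cycles of $H$ are pairwise edge-disjoint and, $H$ being bridgeless, partition $E(H)$; if there were two of them, connectivity would force two of these (unbalanced) cycles to touch, producing a barbell. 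So $H$ is a single unbalanced cycle. But then, switching $H$ so that it has exactly one negative edge $g$ and summing all conservation conditions, every positive edge cancels and one is left with $\pm 2\phi(g)=0$, contradicting that the flow is nowhere zero.

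Necessity then follows by induction on $\lvert\mathrm{supp}(\phi)\rvert$: by the lemma, choose a signed circuit $S\subseteq\mathrm{supp}(\phi)$ and a scalar $\lambda\ne0$ for which $\phi-\lambda f_S$ vanishes on some edge of $S$ (possible since $f_S$ is supported inside $\mathrm{supp}(\phi)$); then $\phi-\lambda f_S$ is a flow with strictly smaller support, and iterating expresses $\phi=\sum_i\lambda_i f_{S_i}$ as a combination of signed-circuit flows, so each edge $e$, having $\phi(e)\ne0$, lies in some $S_i$. For sufficiency, assume every edge lies in a signed circuit; fix for each $e$ such a circuit $S_e\ni e$ with canonical flow $f_{S_e}$, fix a bijection $r\colon E(G)\to\{0,\dots,\lvert E(G)\rvert-1\}$, and put $\phi=\sum_e 4^{\,r(e)}f_{S_e}$, a flow. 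For an edge $g$, letting $e_0$ maximise $r$ over the nonempty set $\{e:g\in S_e\}$, the term $4^{\,r(e_0)}f_{S_{e_0}}(g)$ dominates the remaining ones (each $\lvert f_{S_e}(g)\rvert\le2$ and the other exponents are at most $r(e_0)-1$), so $\phi(g)\ne0$ and $G$ is flow-admissible. The main difficulty is the lemma — ruling out a bridgeless, flow-admissible signed graph with neither a balanced cycle nor a barbell — where the structural reduction to a single unbalanced cycle, together with the parity computation showing such a cycle carries no nowhere-zero flow, is the crux.
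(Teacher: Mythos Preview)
The paper does not prove this theorem at all: it is quoted as Bouchet's result \cite[Proposition~3.1]{Bou} and used as a black box throughout. There is therefore no proof in the paper to compare your proposal against; what can be said is that you supply a self-contained argument where the paper relies on a citation.

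Your argument is essentially correct. Both directions are standard in spirit: sufficiency via a nowhere-zero integer combination of the canonical signed-circuit flows $f_{S_e}$ with geometrically growing coefficients, and necessity via greedily peeling off signed circuits from the support of a given flow. The substantive content is the lemma that a nonempty bridgeless signed graph carrying a nowhere-zero flow must contain a signed circuit, and your proof of it is right in outline but glosses over one point. When you produce two edge-disjoint unbalanced cycles and assert that ``a shortest path between them \dots\ completes to a barbell'', you are tacitly assuming they share at most one vertex. If they share two or more vertices, pick one common vertex and walk along the second cycle until you first re-enter the first cycle; this gives a theta subgraph, and in a signed graph the three cycles of a theta cannot all be unbalanced (summing the three parities shows an even number of them are unbalanced), so a balanced cycle appears after all---still a contradiction. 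The same patch covers the later step where the edge-disjoint cycles partitioning $E(H)$ are claimed to ``touch'' in a single vertex. With this adjustment, and the remark that the lemma and the final contradiction should be read componentwise, your proof goes through.
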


A \emph{two-terminal graph} $(G,s,t)$ consists of a graph $G$ together
with two distinguished vertices, the \emph{source terminal} $s$ and
the \emph{target terminal} $t$, where $s\neq t$. We abbreviate
$(G,s,t)$ to $G$.

Let $G_1,\dots,G_n$ be two-terminal graphs and $H$ their disjoint
union. The \emph{series connection} $\scon(G_1,\dots,G_n)$ of
$G_1,\dots,G_n$ is obtained from $H$ by identifying, for
$i=1,\dots,n-1$, the target terminal of $G_i$ with the source terminal
of $G_{i+1}$. By definition, the source and target terminal of
$\scon(G_1,\dots,G_n)$ is the source terminal of $G_1$ and the target
terminal of $G_n$, respectively.

The \emph{parallel connection} $\pcon(G_1,\dots,G_n)$ of
$G_1,\dots,G_n$ is obtained from $H$ by identifying their source
terminals and identifying their target terminals. The source terminal
of the resulting graph is the vertex obtained by the identification of
the source terminals of the graphs $G_i$, and similarly for the target
terminal.

A \emph{series-parallel graph} is a two-terminal graph obtained by a
sequence of series and parallel connections, starting with copies of
$K_2$ (with some choice of the terminals).

If $G_1,\dots,G_n$ ($n\geq 2$) are series-parallel graphs such that
$G$ is a series or a parallel connection of $G_1,\dots,G_n$ and $n$ is
maximum with this property, then we refer to the $G_i$ as \emph{parts}
of $G$. In addition, in the case of a series connection, $G_1$ and
$G_n$ are the \emph{endparts} of $G$. We say that $G'$ is a
\emph{piece} of $G$ if there is a sequence $G'=H_0,H_1,\dots,H_m=G$
such that for each $j=0,\dots,m-1$, $H_j$ is a part of $H_{j+1}$. In
particular, $G$ itself is a piece of $G$.

The \emph{replacement} of $G'$ by a series-parallel graph $H'$ in $G$
consists in removing all the edges and non-terminal vertices of $G'$
in $G$, adding $H'$ and identifying each of its terminals with the
corresponding terminal of $G'$ in $G$.

We introduce the following notation for small signed series-parallel
graphs: $K_2^+$ denotes the positive $K_2$, $K_2^-$ stands for the
negative $K_2$, and $D$ is the unbalanced 2-cycle.

We define the \emph{depth} $\depth G$ of a signed series-parallel
graph $G$ by letting $\depth{K_2^+} = \depth{K_2^-} = 0$, and
\begin{equation*}
  \depth G = 1 + \max_H\depth H,
\end{equation*}
where $H$ ranges over all parts of $G$.

The following observation is immediate from the definition:
\begin{observation}\label{obs:depth}
  If $0\leq k \leq \depth G$, then $G$ contains a piece of depth $k$.
\end{observation}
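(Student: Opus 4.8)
The plan is to induct on $\depth G$. The statement is immediate when $k=\depth G$: by definition $G$ is a piece of itself, and it has depth $k$. So the real content lies in producing pieces of strictly smaller depth, and for that I would pass to a suitably chosen part of $G$ and invoke the induction hypothesis there.

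Concretely, assume $0\le k<\depth G$; in particular $\depth G\ge 1$, so $G$ is not a single signed edge and therefore $G$ is a series or parallel connection of its parts $G_1,\dots,G_n$ with $n\ge 2$, and $\depth G=1+\max_i\depth{G_i}$. Pick a part $G_i$ attaining this maximum, so that $\depth{G_i}=\depth G-1\ge k$. Since $\depth{G_i}<\depth G$, the induction hypothesis applies to $G_i$, and because $0\le k\le\depth{G_i}$ it gives a piece $G'$ of $G_i$ with $\depth{G'}=k$.

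It remains to note that the ``piece of'' relation is transitive: if $G'$ is a piece of $G_i$, witnessed by a sequence $G'=H_0,H_1,\dots,H_m=G_i$ with each $H_j$ a part of $H_{j+1}$, and $G_i$ is a part of $G$, then appending $G$ to this sequence shows that $G'$ is a piece of $G$. Hence $G'$ is the desired piece of $G$ of depth $k$, completing the induction.

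There is no genuine obstacle here; the proof is a routine structural induction. The only points requiring a little care are the transitivity of ``piece of'' (so that a piece of a part is again a piece) and the observation that the maximizing part has depth exactly $\depth G-1$, which is precisely what makes the induction hypothesis applicable with the same value of $k$.
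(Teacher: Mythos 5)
Your induction is correct and is exactly the routine argument the paper has in mind when it calls the observation ``immediate from the definition'': take a part of maximum depth (which has depth $\depth G-1$), recurse, and use transitivity of the ``piece of'' relation. Nothing is missing.
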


Another useful observation is the following one; it can be proved by
straightforward induction:
\begin{observation}\label{obs:two}
  Each non-terminal vertex of a series-parallel graph has at least two
  distinct neighbours.
\end{observation}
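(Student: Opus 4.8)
The plan is to argue by induction on the number of series and parallel connections used in the construction of the series-parallel graph $G$. In the base case, $G$ is a copy of $K_2$, and both of its vertices are terminals, so the statement holds vacuously. For the inductive step, suppose $G$ has been built with at least one connection; then $G$ is a series or a parallel connection of two-terminal series-parallel graphs $G_1,\dots,G_n$ with $n\geq 2$, each constructed with fewer connections, so the inductive hypothesis applies to every $G_i$. Fix a non-terminal vertex $v$ of $G$; I must exhibit two distinct neighbours of $v$ in $G$. Throughout I would use the elementary fact (itself an easy induction) that a two-terminal series-parallel graph is connected and loopless, and that its two terminals are distinct by definition; in particular each terminal has at least one neighbour.

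First I would treat the case $G=\pcon(G_1,\dots,G_n)$. Here the only identifications are among the source terminals and among the target terminals of the parts, so a vertex of $G$ is either a terminal or a non-terminal vertex of exactly one part; since $v$ is non-terminal, it is a non-terminal vertex of a unique $G_j$, and its set of neighbours is the same whether computed in $G_j$ or in $G$ (a neighbour of $v$ that happened to be a terminal of $G_j$ is merely renamed, and the two distinct terminals of $G_j$ are sent to the two distinct terminals of $G$). By the inductive hypothesis $v$ already has two distinct neighbours in $G_j$, and we are done.

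Now suppose $G=\scon(G_1,\dots,G_n)$, and for $i=1,\dots,n-1$ let $w_i$ be the vertex obtained by identifying the target terminal of $G_i$ with the source terminal of $G_{i+1}$. Any non-terminal vertex $v$ of $G$ is either a non-terminal vertex of a unique part $G_j$ — in which case the inductive hypothesis applies exactly as in the parallel case — or it is one of the junction vertices $w_i$. In the latter case $v$ is simultaneously the target terminal of $G_i$ and the source terminal of $G_{i+1}$, so it has a neighbour $x$ in $G_i$ and a neighbour $y$ in $G_{i+1}$; since the parts are loopless, $x\neq v\neq y$. As consecutive parts of a series connection meet only in the junction vertex, $G_i$ and $G_{i+1}$ share no vertex other than $v$ itself, whence $x\neq y$. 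Thus $v$ has two distinct neighbours in $G$, which completes the induction.

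I do not expect a genuine obstacle here — this is the ``straightforward induction'' promised in the statement. The only points requiring a little care are the junction vertices $w_i$ in the series case: one must invoke connectedness of the parts to guarantee that each of $G_i$ and $G_{i+1}$ supplies a neighbour of $w_i$, one must use the internal disjointness of the parts to be sure these two neighbours are distinct, and one must use the absence of loops to be sure neither of them coincides with $w_i$.
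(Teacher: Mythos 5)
Your proof is correct and is precisely the ``straightforward induction'' that the paper alludes to without writing out: the paper gives no explicit proof of Observation~\ref{obs:two}. You handle the only nontrivial points (the junction vertices in a series connection, using connectedness, internal disjointness of consecutive parts, and looplessness) correctly.
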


We will need the following lemma, proved in~\cite{Epp} by induction:
\begin{lemma}\label{l:change}
  If $e=xy$ is an edge in a $2$-connected series-parallel graph $G$,
  then $(G,x,y)$ is a series-parallel graph.
\end{lemma}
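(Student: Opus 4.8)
The plan is to induct on $\size{E(G)}$, using the recursive structure of series-parallel graphs together with two easy auxiliary facts that I would establish first. \emph{(i)} The class of two-terminal series-parallel graphs is closed under swapping the two terminals: by induction on the structure, $\scon(G_1,\dots,G_n)$ with terminals swapped is $\scon(G_n^{\ast},\dots,G_1^{\ast})$, where $\ast$ denotes the terminal swap, while $\pcon$ is symmetric. \emph{(ii)} A replacement principle: if $(J,x,y)$ is series-parallel, $f=uv\in E(J)$ and $(Q,u,v)$ is series-parallel, then replacing the edge $f$ (viewed as a $K_2$-piece of $J$) by $Q$ yields a series-parallel graph with terminals $x,y$; this is a one-line induction on the structure of $(J,x,y)$ that localises $f$ to a single part, and is essentially a special case of the replacement operation defined in Section~2.

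The base case $G=K_2$ is trivial. For the inductive step I would pick terminals $a,b$ with $(G,a,b)$ series-parallel, so that $(G,a,b)$ is a maximal series or parallel connection of its parts $G_1,\dots,G_n$ with $n\ge2$. I claim it cannot be a series connection: if $(G,a,b)=\scon(G_1,\dots,G_n)$, the vertex $v$ obtained by identifying the target of $G_1$ with the source of $G_2$ is an internal vertex of this series connection, hence distinct from $a$ and $b$, and it lies on every $a$--$b$ path, so $v$ is a cut vertex of $G$, contradicting $2$-connectivity. Hence $(G,a,b)=\pcon(G_1,\dots,G_n)$, and each part $G_i$ is either a parallel edge $ab$ or a maximal series connection $\scon(H_1,\dots,H_p)$ with $p\ge2$ and spine $a=w_0,w_1,\dots,w_p=b$.

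If the endvertices of $e$ are $a$ and $b$, then $(G,x,y)$ is $(G,a,b)$ or its terminal swap, and we are done by \emph{(i)}. Otherwise $e$ lies in a unique part $G_i=\scon(H_1,\dots,H_p)$ and within a unique $H_q$. Let $R$ be the parallel connection of the parts $G_j$ with $j\ne i$, so $(G,a,b)=\pcon(R,G_i)$. The key step is to observe that $G$, re-rooted at the terminals $w_{q-1},w_q$ of $H_q$, is again a parallel connection: $(G,w_{q-1},w_q)=\pcon(H_q,Q)$, where $Q$ is the series connection, taken in the appropriate orientation, of the reversed chain $H_{q-1},\dots,H_1$, then $R$, then the reversed chain $H_p,\dots,H_{q+1}$ --- intuitively the ``long way around'' the generalised cycle $\pcon(R,\scon(H_1,\dots,H_p))$ from $w_{q-1}$ to $w_q$ bypassing $H_q$; in particular $Q$ is series-parallel by \emph{(i)}. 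If $\{x,y\}=\{w_{q-1},w_q\}$ we again conclude by \emph{(i)}. Otherwise $H_q$ has at least two edges, and $H_q+w_{q-1}w_q=\pcon(H_q,K_2)$ is a $2$-connected series-parallel graph --- adding the edge $w_{q-1}w_q$ destroys every cut vertex of $H_q$, each of which separates $w_{q-1}$ from $w_q$ --- with strictly fewer edges than $G$ and containing $e$. By the induction hypothesis $(H_q+w_{q-1}w_q,x,y)$ is series-parallel, and applying \emph{(ii)} to substitute $Q$ for the added edge $w_{q-1}w_q$ turns this into $(G,x,y)$, which is therefore series-parallel.

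I expect the crux to be the re-rooting step: recognising that $G$ viewed from the terminals of the interior piece $H_q$ is still a parallel connection, and keeping track of the orientations of the reversed chains that make up $Q$ (and of the degenerate cases $q=1$ and $q=p$, where one of the chains is empty); the remaining steps are then routine. A shorter but less self-contained alternative would replace the induction by the classical characterisation of series-parallel graphs as the $K_4$-minor-free graphs: adding a parallel copy of an edge cannot create a $K_4$ minor, so $G+xy$ is $K_4$-minor-free whenever $G$ is, and a $2$-connected graph in which some edge $xy$ has $K_4$-minor-free closure is two-terminal series-parallel with terminals $x,y$.
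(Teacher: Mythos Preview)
Your inductive argument is correct. The paper does not actually supply its own proof of this lemma: it simply quotes the result from Eppstein~\cite{Epp} with the remark that it is ``proved by induction'' there. Your proof is in the same spirit---an induction on $\size{E(G)}$ using the recursive series-parallel structure---and the details you give (forcing the top-level decomposition to be parallel via $2$-connectivity, the ``long way around'' re-rooting $(G,w_{q-1},w_q)=\pcon(H_q,Q)$, and the edge-substitution principle to transfer the inductive conclusion from $H_q+w_{q-1}w_q$ back to $G$) are all sound. The one step worth spelling out a bit more carefully is the $2$-connectivity of $H_q+w_{q-1}w_q$: you should note explicitly that any cutvertex $v$ of $H_q$ which failed to separate the two terminals would leave a component of $H_q-v$ containing neither terminal, and since non-terminal vertices of a piece have all their neighbours inside that piece, $v$ would then be a cutvertex of $G$ itself. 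With that made explicit, the argument is complete and matches the approach the paper attributes to~\cite{Epp}. Your closing remark about the $K_4$-minor-free route is also valid, though as you say it trades the self-contained induction for an appeal to Duffin's characterisation.
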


%.....................................................................

\section{Reduced graphs}
\label{sec:reduced}

We begin by using easy reductions to prove the following observation:

\begin{lemma}\label{l:reductions}
  Let $G$ be a counterexample to Theorem~\ref{t:main} of minimum
  size. Then $G$ has the following properties:
  \begin{enumerate}[(i)]
  \item if $G$ is of series type, then its endparts are unbalanced,
  \item the degree of each non-terminal vertex is at least three,
  \item if a terminal vertex has degree two, then it is contained in a
    2-cycle.
  \end{enumerate}
\end{lemma}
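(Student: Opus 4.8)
The strategy is the usual one for results of this type: take $G$ to be a minimum-size counterexample to Theorem~\ref{t:main} and show that if any of (i)--(iii) fails, then $G$ admits a smaller flow-admissible signed series-parallel graph $G'$ from which a nowhere-zero $6$-flow on $G$ can be reconstructed, contradicting minimality. Throughout I would use freely that switching preserves everything relevant, that flow-admissibility is equivalent to every edge lying in a signed circuit (Theorem~\ref{t:admissible}), and that $G$ itself is flow-admissible (otherwise it is not a counterexample at all). Note first that $G$ has at least one edge that is not a loop-like $2$-cycle and in fact must have depth at least $1$, since the graphs $K_2^{\pm}$ and $D$ all admit (or fail to be flow-admissible in a way that) make them non-counterexamples; I would record at the outset that $G$ is neither $K_2^+$, $K_2^-$, nor $D$.

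For (i): suppose $G$ is of series type with parts $G_1,\dots,G_n$ and the endpart $G_1$ is balanced. I would argue that in a series connection, a balanced endpart behaves like a piece of wire: every signed circuit through an edge of $G_1$ either stays inside $G_1$ or must pass through the terminal of $G_1$ that is shared with $G_2$ and return, but a balanced two-terminal piece carrying a nowhere-zero flow forces the terminals to have equal ``potential'' in a way that lets one contract or delete it. Concretely, one shows $G_1$ can be replaced by a single $K_2^+$ (or even deleted with its free terminal becoming the new source), yielding a strictly smaller series-parallel graph $G'$; one checks $G'$ is still flow-admissible (every edge still lies in a signed circuit, using that $G_1$ was balanced so the signed circuits of $G$ not meeting $G_1$ survive, and edges of $G_1$ were only in balanced configurations that are handled inside $G_1$), gets a nowhere-zero $6$-flow on $G'$ by minimality, and extends it over $G_1$ using that a balanced two-terminal series-parallel graph with prescribed net flow across it always admits a compatible nowhere-zero $6$-flow (a routine induction, or: balanced series-parallel graphs are essentially ordinary series-parallel graphs, which have $3$-flows with any prescribed boundary). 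The case analysis of whether $G_1$ has one or two non-terminal-touching edges and whether deleting versus contracting is the right move is the fiddly part here.

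For (ii): let $v$ be a non-terminal vertex of degree at most two; by Observation~\ref{obs:two} its degree is not one (a non-terminal vertex has two distinct neighbours, but more to the point a degree-one vertex would make its incident edge lie in no signed circuit, contradicting flow-admissibility), so $v$ has degree exactly two, with incident edges $e_1=vu_1$ and $e_2=vu_2$. After switching at $v$ if necessary, I may assume $e_1$ is positive. If $e_2$ is also positive, suppress $v$: replace the path $u_1 v u_2$ by a single positive edge $u_1u_2$; the result $G'$ is series-parallel and smaller, flow-admissibility is preserved because signed circuits map bijectively (a signed circuit through $v$ used both $e_1,e_2$ and becomes one through the new edge), and a $6$-flow on $G'$ pulls back by splitting the new edge's flow value across $e_1,e_2$. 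If instead $e_2$ is negative, then any signed circuit through $e_1$ must leave $v$ along $e_2$ and, since $e_2$ is negative, the walk is ``switched'' — I would show $u_1=u_2$ is impossible to combine usefully unless this creates a $2$-cycle, and in the generic case suppress $v$ into a single negative edge $u_1u_2$, again a size-reducing replacement preserving flow-admissibility, with the flow extension immediate. The one subtlety is when suppression would identify two already-adjacent vertices or collapse a part; there one uses Lemma~\ref{l:change} to re-choose terminals so that the reduced object is still a legitimate series-parallel graph.

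For (iii): suppose a terminal $t$ has degree two with incident edges $e_1,e_2$ and $t$ is \emph{not} in a $2$-cycle, so $e_1=tu_1$, $e_2=tu_2$ with $u_1\neq u_2$. Then $G$ is of series type with $t$ an interior ``cut vertex'' of the series structure — more precisely, $t$ being a degree-two terminal not on a $2$-cycle means $G=\scon(G_1,G_2)$ where $G_1$ is the single edge $e_1$ (viewed as a $K_2$ with terminals $t$ and $u_1$) — wait, rather: a degree-two vertex with distinct neighbours that is a terminal forces the outermost decomposition to be a series connection in which the two ``ends at $t$'' are $e_1$ and $e_2$. I would then reduce exactly as in (ii): suppress $t$, making $u_1$ or $u_2$ the new terminal and merging $e_1,e_2$ into one edge of the appropriate sign (after switching at $t$), obtaining a smaller series-parallel graph, checking flow-admissibility and lifting the $6$-flow. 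Since $t$ is not in a $2$-cycle, $u_1\neq u_2$ and the suppressed graph has no new parallel pair issue, so the replacement is clean.

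\textbf{Main obstacle.} The real work is not the graph surgery but verifying two things uniformly across all three cases: that the reduced graph $G'$ remains \emph{flow-admissible} (for which I track signed circuits through the modified part and invoke Theorem~\ref{t:admissible}), and that a nowhere-zero $6$-flow on $G'$ \emph{extends} back to $G$. In (ii) and (iii) the extension is a one-line splitting of a single flow value; in (i) it requires the auxiliary fact that a balanced series-parallel two-terminal graph admits a nowhere-zero $6$-flow (indeed $3$-flow) with any prescribed boundary value across its terminals — this is the lemma I expect to prove separately by induction on the series-parallel structure, and it is the genuine crux of part (i).
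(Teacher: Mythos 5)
Your parts (ii) and (iii) follow essentially the paper's argument: after switching at the degree-two vertex so that one incident edge is positive, you eliminate the vertex by a local operation and lift the $6$-flow back, using Observation~\ref{obs:two} to ensure the two neighbours are distinct so that no loop arises. (The paper contracts the positive edge rather than suppressing the vertex, and recovers the value on the contracted edge from conservation at the degree-two vertex; the two operations are interchangeable here, and your worry about needing Lemma~\ref{l:change} in this step is unfounded.)

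Part (i) is where the proposal goes astray. Your primary suggestion --- replace the balanced endpart $G_1$ by a single $K_2^+$ --- cannot work: in a series connection the new edge is a bridge one of whose ends is a degree-one terminal of the new graph, so it lies on no cycle and in no barbell; hence the resulting graph is \emph{not} flow-admissible and minimality gives you nothing. Correspondingly, the ``crux'' you single out (a balanced two-terminal series-parallel graph admits a nowhere-zero flow with any prescribed boundary value) is both unnecessary and false as stated (prescribe the value $0$ across a single edge). The route that works is the one you mention only parenthetically, and it is what the paper does: delete $V(G_1)\setminus\{u\}$, where $u$ is the cutvertex terminal of $G_1$, to obtain $H'$; check via Theorem~\ref{t:admissible} that $H'$ is flow-admissible (every signed circuit of $G$ through an edge of $H'$ lies entirely in $H'$, because $G_1$ is balanced and no cycle or barbell can pass through the cutvertex $u$ twice); obtain a nowhere-zero $6$-flow on $H'$ by minimality; and place an \emph{independent} nowhere-zero $3$-flow on the balanced (hence effectively unsigned) graph $G_1$. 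No boundary matching is needed precisely because $u$ is a cutvertex: each of the two flows conserves at $u$ on its own, so their union is a nowhere-zero $6$-flow on $G$. The one fact you would still need to record is that $G_1$ is bridgeless (so that the $3$-flow exists); this holds because a bridge of $G_1$ would be a bridge of $G$ lying in no signed circuit, contradicting the flow-admissibility of $G$.
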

\begin{proof}
  We prove (i). Suppose that $G$ has a balanced endpart $H$; let $u$
  denote the terminal $u$ of $H$ that is a cutvertex of $G$, and let
  $H'$ be obtained from $G$ by removing $V(H)-\Setx{u}$. We show using
  Theorem~\ref{t:admissible} that $H'$ is flow-admissible. Suppose the
  contrary; then an edge $e$ of $H'$ is not contained in a signed
  circuit of $H'$. However, $G$ is flow-admissible, and it is not hard
  to see that any signed circuit of $G$ missing in $H'$ is a balanced
  circuit contained in $H$ (as $H$ is balanced and $u$ is a
  cut-vertex). This is a contradiction, so $H'$ is indeed
  flow-admissible. By the minimality of $G$, $H'$ admits a
  nowhere-zero 6-flow. Since any unsigned series-parallel graph admits
  a nowhere-zero 3-flow, so does the balanced signed graph
  $H$. Combining these two flows, we obtain a nowhere-zero 6-flow on
  $G$, a contradiction.

  Let us prove (ii). Let $u$ be a non-terminal vertex of degree 2, say
  incident with edges $e_1,e_2$. Switching at $u$ if necessary, we may
  assume that $e_1$ is positive. By Observation~\ref{obs:two}, the
  endvertices of $e_1,e_2$ different from $u$ are
  distinct. Contracting $e_1$, we therefore obtain a (loopless)
  series-parallel graph $G'$. Since the contraction of a positive edge
  preserves the existence of a nowhere-zero flow, $G'$ is
  flow-admissible and hence it has a nowhere-zero 6-flow by the
  minimality of $G$. This corresponds to a 6-flow $\psi$ on $G$,
  possibly with $\psi(e_1)=0$. However, since $u$ has degree 2, we
  have $|\psi(e_1)|=|\psi(e_2)|$, so $\psi$ is nowhere-zero.

  A similar argument works for terminal vertices of degree 2 with two
  distinct neighbours. This proves (iii).
\end{proof}

The following result will be useful in the proof of
Lemmas~\ref{l:parallel} and \ref{l:admissible} below.

\begin{lemma}
  \label{l:endparts}
  If a signed series-parallel graph $G'$ is of series type and has
  unbalanced endparts, then every edge of $G'$ is contained in a
  barbell.
\end{lemma}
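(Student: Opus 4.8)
The plan is to proceed by induction on $\depth{G'}$, exploiting the fact that $G'$ is a series connection whose two endparts are unbalanced. The base case is when both endparts are single negative edges (copies of $D$ would not be endparts, being parallel connections, but a $K_2^-$ can be), i.e.\ $G'$ is a path of negative edges; more generally when $\depth{G'}$ is small one checks directly that $G'$ contains two edge-disjoint unbalanced cycles joined by a path through the series structure, and every edge lies on the path $P$ or on one of the cycles $C_1,C_2$, so every edge is in a barbell.

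For the inductive step, write $G' = \scon(G_1,\dots,G_n)$ with $G_1,G_n$ the unbalanced endparts. Since $G_1$ is unbalanced, it contains an unbalanced cycle $C_1$; likewise $G_n$ contains an unbalanced cycle $C_2$. Because these live in different parts of the series connection they are automatically edge-disjoint and connected by a path through the ``spine'' of the series connection (the sequence of identified terminals), so $C_1$, $C_2$, together with any spine path $P$ joining them, form a barbell $B_0$; every edge of the spine, and every edge of $C_1\cup C_2$, is covered. It remains to handle an edge $e$ lying inside some part $G_i$ but not on $C_1\cup C_2$ and not on the spine. The idea is: if $G_i$ is itself of series type with unbalanced endparts, apply the induction hypothesis inside $G_i$ (its depth is strictly smaller) to get a barbell of $G_i$ containing $e$, and then splice it together with an unbalanced cycle in the \emph{other} endpart $G_1$ or $G_n$ and a spine path — but one must be careful that a barbell of $G_i$ need not reach a terminal of $G_i$, so instead one should extract from it a single unbalanced cycle through $e$ (a barbell contains an unbalanced cycle through any prescribed edge, either directly or by rerouting through the connecting path), and then join that cycle to $C_1$ or $C_2$ by a spine path to form the required barbell of $G'$.

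The case analysis on the type of the part $G_i$ containing $e$ is where the work concentrates. If $G_i = K_2^-$ or $G_i = D$, then $e$ together with a second edge (in the $D$ case) or together with the spine already lies on an unbalanced cycle, easily joined to $C_1$ or $C_2$. If $G_i$ is of parallel type, I would look at the part of $G_i$ containing $e$; either that part is unbalanced, giving an unbalanced cycle through $e$ inside $G_i$, or it is balanced, in which case $e$ lies on a balanced cycle of $G_i$ using one other parallel part, and since the whole of $G'$ must route an unbalanced cycle somewhere, $e$ can be absorbed into a barbell with $C_1,C_2$. If $G_i$ is of series type, its endparts may be balanced or unbalanced; by Lemma~\ref{l:reductions}(i)-style reasoning is not available here (we are not assuming minimality), so I would instead argue directly: a series connection always contains a path between its terminals through every one of its parts, hence $e$ lies on a terminal-to-terminal path of $G_i$, which extends along the spine of $G'$ to a path from a terminal of $G'$; combined with $C_1$ and $C_2$ this still yields a barbell through $e$ provided the path can be closed into or attached to an unbalanced cycle, which it can because both endparts of $G'$ are unbalanced.

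The main obstacle I anticipate is the bookkeeping needed to guarantee edge-disjointness of the two unbalanced cycles and internal-disjointness of the connecting path in the barbell, especially when $e$ lies deep inside a part and the natural unbalanced cycle through $e$ wants to use spine vertices or edges that are also needed for the connecting path. Handling this cleanly will likely require choosing $C_1$ (or $C_2$) minimal and localising it strictly inside the endpart $G_1$ (resp.\ $G_n$), away from the spine beyond its terminals, so that the path $P$ can always be taken along the spine without collision. Once that localisation is set up, the induction should go through routinely.
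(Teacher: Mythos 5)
There are genuine gaps, and the most serious one is precisely in the case that carries all the weight: an edge $e$ lying inside an unbalanced endpart $R_1$. The statement you lean on --- that a barbell contains an unbalanced cycle through any prescribed edge, ``either directly or by rerouting through the connecting path'' --- is false: an edge on the connecting path of a barbell lies on \emph{no} cycle of the barbell at all, so there is nothing to extract or reroute. Likewise, the claim that if $e$ lies only on a balanced cycle of its part then it ``can be absorbed into a barbell with $C_1,C_2$'' is never substantiated. What is actually needed here (and what the paper proves) is that every edge of $R_1$ lies on an unbalanced cycle \emph{of $R_1$}: since $R_1$ is unbalanced it is not a signed $K_2$, and since it is a part of a series connection it is not itself of series type, so it is a parallel connection and hence $2$-connected; one then takes two vertex-disjoint paths from the endvertices of $e$ to a fixed unbalanced cycle $C_1$ and notes that of the two cycles obtained by closing up with the two arcs of $C_1$, at least one is unbalanced (their edge sets differ exactly in the two arcs, whose negative edges sum to an odd number). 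That unbalanced cycle through $e$, together with $C_2$ and a connecting path through the cutvertices, is the required barbell. Your proposal never actually produces an unbalanced cycle through $e$ in this situation.

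There are also problems with the setup. Your proposed base case, ``$G'$ is a path of negative edges,'' does not satisfy the hypothesis of the lemma: a $K_2^-$ contains no cycle and is therefore \emph{balanced}, so it cannot serve as an unbalanced endpart; and, contrary to your parenthetical, $D$ certainly can be an endpart (e.g.\ in $\scon(D,K_2^+,D)$) --- the parts of a series connection are exactly copies of $K_2$ and parallel connections. This confusion also undermines the induction: by the maximality of $n$ in the definition of parts, no part $G_i$ of a series connection is itself of series type, so the inductive hypothesis of the lemma never applies to any part, and the induction on depth does no work. The correct argument is direct rather than inductive, with only two cases: if $e$ lies outside both endparts, take an $r_1r_2$-path through $e$ (where $r_i$ is the inner terminal of the endpart $R_i$) and extend it to a path joining $C_1$ to $C_2$; if $e$ lies inside an endpart, use the $2$-connectivity argument above.
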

\begin{proof}
  Let the endparts of $G'$ be denoted by $R_1$, $R_2$. For $i=1,2$,
  let $r_i$ be the terminal of $R_i$ that is not a terminal of
  $G$. Let $C_i$ ($i=1,2$) be an unbalanced cycle in $R_i$.

  Let $e$ be an edge of $G'$; we need to show that $e$ is contained in
  a barbell. If $e$ is not contained in an endpart, then $G'$ contains
  a path $P$ from $r_1$ to $r_2$ containing $e$. Extending $P$ to a
  path joining $C_1$ to $C_2$, we obtain a barbell in $G'$ containing
  $e$.

  We may therefore assume that $e$ is contained in $R_1$. Since $R_1$
  is unbalanced, $R_1$ is different from a signed $K_2$ and therefore
  2-connected. Choose two vertex-disjoint paths in $R_1$ connecting
  the endvertices of $e$ to $C_1$. Taking the union of these paths
  with $e$ and a suitable subpath of $C_1$, we obtain an unbalanced
  cycle $C'_1$ in $R_1$ containing $e$. Then the union of $C'_1$,
  $C_2$ and an appropriate extension of $P$ is a barbell in $G'$
  containing $e$.
\end{proof}

\begin{lemma}\label{l:parallel}
  If $G$ is a counterexample to Theorem~\ref{t:main} of minimum size,
  then $G$ contains no pair of parallel edges of the same sign.
\end{lemma}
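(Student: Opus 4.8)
The plan is to suppose, for a contradiction, that the minimum counterexample $G$ has parallel edges $e_1,e_2$ of a common sign, with endvertices $x$ and $y$. After switching at $x$ if necessary we may assume both are positive, so that $C_0:=e_1\cup e_2$ is a balanced $2$-cycle. Set $G':=G-e_2$; this is again a signed series-parallel graph and $\size{E(G')}<\size{E(G)}$. The argument has three parts: (1)~show $G'$ is flow-admissible; (2)~apply the minimality of $G$ to obtain a nowhere-zero $6$-flow $\phi'$ of $G'$; (3)~modify $\phi'$ into a nowhere-zero $6$-flow of $G$ by splitting the flow value on $e_1$. Part~(3) is easy: if $\phi'(e_1)=a$ then $0<\size a\le 5$, so there are nonzero integers $b$ and $a-b$ of absolute value less than $6$; orienting $e_1$ and $e_2$ consistently and putting $\phi(e_1)=b$, $\phi(e_2)=a-b$, with $\phi$ unchanged on every other edge, preserves conservation at $x$ and $y$ (and nowhere else is affected), so $\phi$ is a nowhere-zero $6$-flow of $G$---a contradiction. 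Everything therefore hinges on part~(1).

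By Theorem~\ref{t:admissible} I need every edge of $G'$ to lie in a signed circuit of $G'$. Given an edge $f\ne e_1$, pick a signed circuit $C$ of $G$ containing $f$; if $e_2\notin C$ then $C$ already lies in $G'$, and if $e_2\in C$ I reroute $C$ through $e_1$, i.e.\ replace $e_2$ by $e_1$ inside whichever piece of $C$ (a balanced cycle, one of the two unbalanced cycles of a barbell, or the connecting path of a barbell) contains it. The point that makes this legitimate---and which I expect to be the single clean observation driving the whole proof---is that $e_1$ cannot already be present in the conflicting piece: since $e_1$ and $e_2$ share \emph{both} endvertices, having $x$ and $y$ simultaneously in two cycles of a barbell, or in its path and one of its cycles, would contradict the fact that two such pieces meet in at most one vertex. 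For $f=e_1$ the same reasoning applies, and the only thing that can go wrong is that $C_0$ is the unique signed circuit of $G$ through $e_1$: indeed every other balanced cycle through $e_1$ avoids $e_2$ (a simple cycle uses at most one of two parallel edges), and every barbell through $e_1$ avoids $e_2$ (by the same endvertex argument), so each of these survives in $G'$.

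It remains to rule out this exceptional case, and this is the heart of the matter. Suppose $C_0$ is the only signed circuit of $G$ through $e_1$ (hence also through $e_2$). Then any signed circuit of $G$ through an edge of $G-e_1-e_2$ avoids both $e_1$ and $e_2$ (otherwise it would equal $C_0$), so it lies entirely in $G-e_1-e_2$; in particular $G-e_1-e_2$ is bridgeless. Moreover $G-e_1-e_2$ must be connected: if not, $\{e_1,e_2\}$ is a $2$-edge-cut, forcing $G$ to be of series type with the part $\pcon(e_1,e_2)$ strictly between two nonempty parts; Lemma~\ref{l:reductions}(i) then makes the two endparts unbalanced, and an unbalanced cycle on each side, joined by a path through $e_1$, is a barbell of $G$ through $e_1$, a contradiction. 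And $G-e_1-e_2$ must be balanced: otherwise an unbalanced cycle in it, together with the unbalanced cycle $e_1\cup Q$ (for an $x$--$y$ path $Q$ in $G-e_1-e_2$, which necessarily has an odd number of negative edges) and a suitable joining path, again forms a barbell through $e_1$. So $G-e_1-e_2$ is a bridgeless balanced---essentially unsigned---series-parallel graph, hence it carries a nowhere-zero $3$-flow. Switching its components to all-positive changes the signs of $e_1$ and $e_2$ in the same way, so up to switching $G$ is a bridgeless unsigned series-parallel graph $A$ with two parallel edges $e_1,e_2$ of a common sign between two of its vertices; combining a nowhere-zero $3$-flow of $A$ with the value $1$ on $e_1$ and $e_2$---oriented consistently if they are positive, one pointing inward and one outward at both endvertices if they are negative---produces a nowhere-zero $3$-flow of $G$, contradicting the choice of $G$. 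Hence the exceptional case is impossible, $G'$ is flow-admissible, and the argument is complete. The main obstacle, as indicated, is exactly this exceptional case: one has to combine the flow-admissibility of $G$ with the structural reductions of Lemma~\ref{l:reductions} to force $G-e_1-e_2$ to be connected and balanced, after which the explicit $3$-flow construction finishes the job; the rerouting and the flow-splitting are routine once the common-endvertices observation is isolated.
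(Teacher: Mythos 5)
Your first case (some signed circuit of $G$ through $e_1$ other than $C_0$ survives in $G-e_2$, so $G-e_2$ is flow-admissible, and a $6$-flow of $G-e_2$ is split across the two parallel edges) is exactly the paper's first case, and your rerouting observation --- that $e_1$ and $e_2$ cannot both appear in a signed circuit other than $C_0$ because they share both endvertices --- is the same "clearly" that the paper invokes. Your exceptional case is where you genuinely diverge. The paper shows that $G-e_1-e_2$ is flow-admissible and that each of its components is series-parallel (using Lemma~\ref{l:change} when $G-e_1$ is $2$-connected, and reaching a contradiction via Lemmas~\ref{l:reductions}(i) and~\ref{l:endparts} otherwise), then applies minimality of $G$ to get a $6$-flow of $G-e_1-e_2$. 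You instead prove the stronger structural fact that $G-e_1-e_2$ is connected \emph{and balanced}, which lets you bypass Lemma~\ref{l:change} and minimality altogether and finish with the unsigned $3$-flow theorem. That is a legitimate and arguably cleaner endgame.

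However, two steps of your exceptional case are asserted rather than proved, and one of them is a genuine gap as written. First, the connectivity step: that disconnection of $G-e_1-e_2$ forces $G$ to be of series type with $\pcon(e_1,e_2)$ as a part strictly between two nonempty parts is true, but it needs an argument (every vertex of a two-terminal series-parallel graph lies on a terminal--terminal path, so the two sides of the cut $\{e_1,e_2\}$ must contain $s$ and $t$ respectively; maximality of the series decomposition then forces $\pcon(e_1,e_2)$ to be a part, and Lemma~\ref{l:reductions}(i) rules out its being a balanced endpart). Second, and more seriously, the balancedness step: an unbalanced cycle $U$ of $G-e_1-e_2$ and the unbalanced cycle $e_1\cup Q$ need \emph{not} form a barbell with any joining path, because $U$ may meet $Q$ in two or more vertices, and then the two cycles violate the intersection condition in the definition of a barbell. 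The fix is the same rerouting trick the paper uses in Lemma~\ref{l:endparts}: if $U$ meets $Q$ in at least two vertices, let $u_1,u_2$ be the first and last such vertices along $Q$ and replace $Q[u_1,u_2]$ by one of the two arcs of $U$; since the two arcs have opposite sign products, one choice yields a \emph{balanced} cycle through $e_1$ distinct from $C_0$, again contradicting the exceptional-case hypothesis. With these two repairs your argument goes through, but as written the barbell claim fails in the intersecting configuration.
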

\begin{proof}
  Suppose that $G$ contains parallel edges $e,f$ of the same sign, say
  both positive. By Theorem~\ref{t:admissible}, each edge of $G$ is
  contained in a signed circuit. Thus, each edge $e'$ of $G-e$ is
  clearly also contained in a signed circuit of $G-e$ unless $e'=f$.

  Assume first that $f$ is also contained in a signed circuit of
  $G-e$. Then $G-e$ is flow-admissible. By the minimality of $G$,
  $G-e$ admits a nowhere-zero 6-flow $\varphi$. Adding to $\varphi$ a
  suitable nowhere-zero 6-flow on the 2-cycle $C$ comprised of $e$ and
  $f$, we obtain a nowhere-zero 6-flow in $G$.

  It follows that $f$ is not contained in a signed circuit. Note that
  in this case, $G-e-f$ is flow-admissible. If we can show that each
  component of $G-e-f$ is series-parallel, then by the minimality of
  $G$, $G-e-f$ admits a nowhere-zero 6-flow, which is easily extended
  to $G$ using a suitable flow on the above 2-cycle $C$.

  It remains to prove that $G-e-f$ is comprised of series-parallel
  components. Let the terminals of $G$ be $u$ and $v$. Suppose first
  that $G-e$ is 2-connected. Then $G-e-f$ is connected. By
  Lemma~\ref{l:change}, $(G-e-f,x,y)$ is a series-parallel graph,
  where $x$ and $y$ are the endvertices of $f$.

  We can therefore assume that $G-e$ is not 2-connected. We claim that $G$ is not 2-connected; assume the contrary. Since, clearly,
  $G-e$ is different from $K_2$, there is a cutvertex of $G-e$
  separating the endvertices of $e$. These are, however, connected by
  the edge $f$, a contradiction.

  Since $G$ is not 2-connected, it has unbalanced endparts by
  Lemma~\ref{l:reductions}(i), and the same holds for $G-e$. By
  Lemma~\ref{l:endparts}, $f$ is contained in a barbell in
  $G-e$, a contradiction with the hypothesis that $f$ is not contained
  in a signed circuit of $G-e$. This concludes the proof.
\end{proof}
 
Let us call a graph \emph{reduced} if the degree of each of its
non-terminal vertices is at least 3, and there is no pair of parallel
edges of the same sign. Observe that each part (and hence, each piece)
of a reduced graph is reduced. It is easy to see that the only reduced
graph of depth 1 is the unbalanced 2-cycle $D$.

A \emph{string} is a series connection of copies of $K_2^+$ and $D$
where each non-terminal vertex is contained in a 2-cycle (see
Figure~\ref{fig:string}). Thus, any string is a reduced graph. A
string is \emph{nontrivial} if it contains more than two vertices.

\begin{figure}
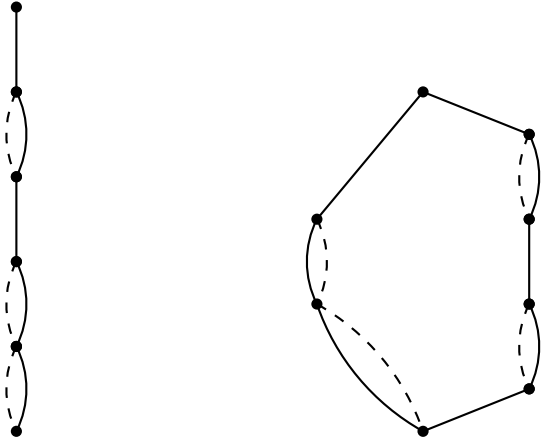

  \centering
  \hf\fig2\hf\fig3\hf
  \caption{A string (left) and a necklace (right). In this and the
    following figure, the source terminal is the topmost vertex and
    the target terminal is the lowermost one.}
  \label{fig:string}
\end{figure}

\begin{lemma}\label{l:equiv}
  Each reduced signed series-parallel graph of depth at most $2$ is
  switching equivalent to a string.
\end{lemma}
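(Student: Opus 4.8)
The plan is to exploit how rigid reduced series-parallel graphs of small depth are. First I would dispose of the cases $\depth G\le 1$ directly: recall that the only signed series-parallel graphs of depth $0$ are $K_2^+$ and $K_2^-$, and that $D$ is the unique reduced graph of depth $1$; each of these is either a (trivial) string or switching equivalent to one, since switching at a terminal turns $K_2^-$ into $K_2^+$. So I may assume $\depth G=2$. Then $G$ is a series or parallel connection of its parts $G_1,\dots,G_n$ with $n\ge 2$, and each $G_i$ has depth at most $1$; since parts of a reduced graph are reduced, each $G_i$ is $K_2^+$, $K_2^-$, or $D$.

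Next I would rule out the parallel case. In a parallel connection all parts join the same two terminals, so any two parts of the same type, as well as any part together with a copy of $D$, would produce two parallel edges of equal sign, contradicting that $G$ is reduced. Hence at most one part is $K_2^+$, at most one is $K_2^-$, and no part is a copy of $D$; together with $n\ge 2$ this forces $G=\pcon(K_2^+,K_2^-)$, which is just $D$ and has depth $1$, a contradiction. Therefore $G$ is of series type, say $G=\scon(G_1,\dots,G_n)$, with terminals $v_0,v_n$ and junction vertices $v_1,\dots,v_{n-1}$, where $v_i$ is the target of $G_i$ and the source of $G_{i+1}$.

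The key structural observation comes from the degree condition. The only non-terminal vertices of $G$ are the junctions $v_i$, and $\deg v_i$ is the number of edges of $G_i$ at $v_i$ plus the number of edges of $G_{i+1}$ at $v_i$; this equals $2$ when $G_i$ and $G_{i+1}$ are both copies of $K_2^{\pm}$, and is at least $3$ otherwise. Since $G$ is reduced, for every $i$ at least one of $G_i,G_{i+1}$ is a copy of $D$. Two consequences follow: (i) every non-terminal vertex of $G$ lies in a $2$-cycle, namely the one inside an incident $D$-part, and (ii) no two consecutive parts are both copies of $K_2^{\pm}$, so each $K_2^{\pm}$-part is flanked on either side by a $D$-part or by a terminal. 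In particular $G$ already has the shape of a string, except that some parts may be $K_2^-$ rather than $K_2^+$.

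Finally I would clean up the signs by switching. Switching at a junction vertex flips the unique edge of each incident $K_2^{\pm}$-part and both (oppositely signed) edges of each incident $D$-part, leaving the $D$-part a copy of $D$. For each part equal to $K_2^-$ I choose one of its endvertices that is a junction — possible since $n\ge 2$ forces every part to have a non-terminal endvertex — and let $S$ be the set of chosen vertices; then I switch at every vertex of $S$. By (ii) each vertex is incident to at most one $K_2^{\pm}$-part, so $S$ contains no endvertex of a $K_2^+$-part (these are untouched) and exactly one endvertex of each $K_2^-$-part (which therefore becomes $K_2^+$), while the $D$-parts survive; hence $G$ becomes a series connection of copies of $K_2^+$ and $D$ in which every non-terminal vertex still lies in a $2$-cycle, i.e.\ a string. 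The one place where care is genuinely needed is this last bookkeeping — checking that the chosen switchings do not interfere with one another and do not spoil the positive $K_2$-parts — and it is exactly the ``no two consecutive $K_2^{\pm}$-parts'' property that makes it go through.
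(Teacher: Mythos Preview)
Your proof is correct and follows essentially the same approach as the paper: reduce to the series case, observe that the parts are $K_2^\pm$ or $D$, and switch to eliminate the negative $K_2$'s. You are more explicit than the paper in two places---ruling out the parallel case (the paper simply notes that the only reduced depth-$1$ graph is the parallel-type graph $D$, so a parallel-type $G$ of depth $2$ is impossible by maximality of parts) and verifying that the switchings do not interfere (the paper just switches at the target terminal of each $K_2^-$ and leaves the check implicit)---but the underlying argument is the same.
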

\begin{proof}
  Let $G$ be a graph satisfying the assumption. If the depth of $G$ is
  0 or 1, then $G$ is $K_2^+$, $K_2^-$ or $D$ and the assertion
  holds. Assume that the depth of $G$ is 2. Since the only reduced
  graph of depth 1 is $D$, $G$ is necessarily of series type; since
  each of its parts is reduced, $G$ is a series connection of copies
  of $K_2^+$, $K_2^-$ and $D$. Switching at the target terminal of
  each $K_2^-$, we obtain a string.
\end{proof}

A \emph{necklace} is a signed series-parallel graph obtained as the
parallel connection of two strings, at least one of which is
nontrivial (see Figure~\ref{fig:string}).

\begin{lemma}\label{l:contains}
  Each reduced signed series-parallel signed graph of depth at least
  $3$ contains a piece switching equivalent to a necklace.
\end{lemma}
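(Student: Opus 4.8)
The plan is to find a piece of $G$ of depth exactly $3$ and to determine its structure completely. Since $\depth{G}\geq 3$, Observation~\ref{obs:depth} provides a piece $P$ of $G$ with $\depth{P}=3$. As every part, and hence every piece, of a reduced graph is reduced, $P$ is reduced, and it suffices to exhibit a necklace piece inside $P$.

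First I would show that $P$ is of parallel type. The only reduced graph of depth $1$ is $D$: a reduced series connection of copies of $K_2^{\pm}$ would contain a non-terminal vertex of degree $2$, while a reduced parallel connection of copies of $K_2^{\pm}$ can only be $\pcon(K_2^+,K_2^-)=D$. Consequently there is no reduced parallel-type graph of depth $2$, since such a graph would need a part of depth $1$, i.e.\ a part equal to $D$, which is impossible as $D$ is of parallel type and so never occurs as a part of a parallel connection. Hence every part of a reduced series-type graph has depth in $\{0,1\}\cup\{3,4,\dots\}$, so a reduced series-type graph never has depth exactly $3$; as $\depth{P}=3$, the graph $P$ is of parallel type.

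Write $P=\pcon(P_1,\dots,P_n)$ with $n$ maximal. Each $P_i$ has depth at most $2$ (otherwise $\depth{P}\geq 4$) and is not of parallel type, hence is not a copy of $D$; by the classification above it is therefore $K_2^+$, $K_2^-$, or a reduced series-type graph of depth $2$, which by Lemma~\ref{l:equiv} is switching equivalent to a nontrivial string. At least one part --- the one realising $\depth{P}=3$ --- is of the last kind. Since $P$ is reduced, the only edges joining the two terminals of $P$ come from $K_2^{\pm}$ parts, so at most one $P_i$ equals $K_2^+$ and at most one equals $K_2^-$.

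It remains to recognise $P$, up to switching, as a necklace. Switching only at internal vertices of the parts, every nontrivial-string part can be made into an honest string, because in a reduced series connection of copies of $K_2^{\pm}$ and $D$ no two consecutive blocks are single edges, so a single-edge block whose sign must be corrected can be flipped by switching at an adjacent internal vertex without disturbing the other blocks. A short adjustment at the two identified terminals then handles the trivial parts (when both a $K_2^+$ and a $K_2^-$ part occur they already form a copy of $D$; a lone $K_2^-$ part is switched to $K_2^+$, re-correcting the string parts as above). Thus $P$ is switching equivalent to a parallel connection of honest strings with at least one nontrivial part: if two parts are nontrivial strings, their parallel connection is a necklace, and if only one part $S$ is nontrivial then $P$ is switching equivalent to $\pcon(S,K_2^+)$ or to $\pcon(S,K_2^+,K_2^-)=\pcon(S,D)$, again a necklace. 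The step I expect to be the main obstacle is this last identification: controlling the signs at the two shared terminals while descending to a parallel connection of exactly two string pieces, so as to land inside the class of necklaces rather than on a parallel connection of three or more nontrivial strings.
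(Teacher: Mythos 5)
Your proof is correct and follows essentially the same route as the paper's: pass to a piece of depth exactly $3$, argue it must be of parallel type because the only reduced graph of depth $1$ is $D$ (so reduced depth-$2$ graphs are of series type), apply Lemma~\ref{l:equiv} to the parts, and switch away stray negative single edges using adjacent unbalanced $2$-cycles. The paper is slightly more economical in the final assembly step --- it simply takes the parallel connection of \emph{two} of the parts, one of depth $2$, rather than tracking the whole parallel connection with its possible $K_2^+$ and $K_2^-$ parts --- but this is a cosmetic difference.
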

\begin{proof}
  Let $G$ be a signed series-parallel graph of depth at least 3. By
  Observation~\ref{obs:depth}, $G$ contains a piece of depth 3 which
  is necessarily reduced. We may therefore assume that the depth of
  $G$ is equal to 3. Since one of its parts has depth 2 and is
  reduced, it is of series type, so $G$ itself is of parallel
  type. Lemma~\ref{l:equiv} implies that $G$ is a parallel connection
  of graphs switching equivalent to a string. Let $H$ be the parallel
  connection of two of these graphs, say $H_1$ and $H_2$, where the
  depth of $H_1$ equals 2. Note that $H_1$ has more than two
  vertices. We show that $H$ is switching equivalent to a
  necklace. First, if $H_2$ is a $K_2^-$, then we perform a switch at
  one of the terminals to change its sign. Each of the remaining
  negative edges has an endvertex contained in an unbalanced 2-cycle;
  we switch at each such endvertex to obtain a necklace.
\end{proof}

%.....................................................................

\section{Pseudoflows}
\label{sec:pseudo}

For the proof of Theorem~\ref{t:main}, we utilise the concept of
\emph{pseudoflow} in a signed series-parallel graph $H$, defined just
as a nowhere-zero $6$-flow in $H$, except that at each terminal, the
inflow is not required to equal the outflow. (In particular,
pseudoflows are nowhere-zero by definition.) Let
$I_5=\{-5,-4,\ldots,5\}$. A pseudoflow in $H$ is an
\emph{$(a,b)$-pseudoflow} (where $a,b\in I_5$) if the outflow at the
source terminal equals $a$ and the inflow at the target terminal
equals $b$. An example of a pseudoflow is shown in
Figure~\ref{fig:pseudo}. As another example, note that $K_2^+$ admits
an $(a,b)$-pseudoflow if and only if $a=b\neq 0$ and $a\in I_5$.

\begin{figure}
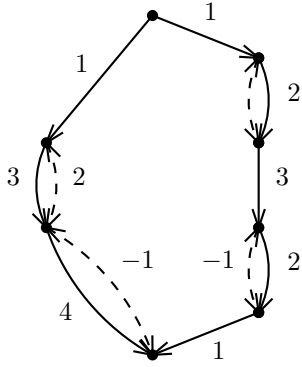

  \centering 
  \fig4
  \caption{A $(2,4)$-pseudoflow in a signed series-parallel graph.}
  \label{fig:pseudo}
\end{figure}

We make a couple of observations related to pseudoflows. An
$(a,b)$-pseudoflow can only exist if $a$ and $b$ have the same
parity. A $(0,0)$-pseudoflow coincides with a nowhere-zero
$6$-flow. Furthermore, if the source terminal of $H$ has degree 1,
then $H$ admits no $(0,b)$-pseudoflow for any $b$. Based on the last
observation, let us say that the pair $(a,b)$ is \emph{valid} for
$H=(H,s,t)$ if either $a \neq 0$ or $d(s) \geq 2$, and at the same time
$b \neq 0$ or $d(t) \geq 2$.

\begin{observation}\label{obs:digon}
  Let $a,b\in\ZZ$ such that $a\equiv b \pmod 2$. If the unbalanced
  $2$-cycle $D$ admits an $(a,b)$-pseudoflow, then $a\neq\pm b$. In the
  converse direction, if $a\neq\pm b$ and $a,b\in I_5$, then $D$
  admits an $(a,b)$-pseudoflow.
\end{observation}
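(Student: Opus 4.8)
The plan is to enumerate directly the pairs $(a,b)$ realised by a pseudoflow on $D$; this is straightforward because $D$ has no non-terminal vertices, so no flow-conservation condition is imposed. Let $e$ denote the positive edge of $D$ and $f$ the negative one, both joining the source terminal $s$ to the target terminal $t$.

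First I would record the contribution of each edge to the outflow at $s$ and to the inflow at $t$. By the same reasoning that shows $K_2^+$ admits an $(a,b)$-pseudoflow exactly when $a=b\neq 0$, the positive edge $e$ contributes a common value $x$ to the outflow at $s$ and to the inflow at $t$, and $x$ may be taken to be any element of $I_5\setminus\{0\}$. For the negative edge $f$, both of its half-edges point towards their endvertices or both point away from them; in either case $f$ contributes one and the same value to the outflow at $s$ and to the outflow at $t$ (namely $\phi(f)$ if the half-edges point away from the endvertices, and $-\phi(f)$ if they point towards them), so it contributes a common value $z\in I_5\setminus\{0\}$ to the outflow at $s$ and $-z$ to the inflow at $t$; again every $z\in I_5\setminus\{0\}$ occurs. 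Consequently, a pseudoflow on $D$ is an $(a,b)$-pseudoflow precisely when
\begin{equation*}
  a = x+z, \qquad b = x-z
\end{equation*}
for suitable $x,z\in I_5\setminus\{0\}$; equivalently, when $x=(a+b)/2$ and $z=(a-b)/2$ both lie in $I_5\setminus\{0\}$.

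From this both directions follow at once. For the forward direction, an $(a,b)$-pseudoflow supplies $x$ and $z$ as above with $x\neq 0$ and $z\neq 0$, hence $a+b\neq 0$ and $a-b\neq 0$, i.e.\ $a\neq\pm b$. For the converse, assume $a\neq\pm b$ and $a,b\in I_5$ (recalling the standing hypothesis $a\equiv b\pmod 2$) and set $x=(a+b)/2$, $z=(a-b)/2$. These are integers because $a\equiv b\pmod 2$, they are nonzero because $a\neq\pm b$, and $|x|\leq 5$ and $|z|\leq 5$ because $|a|,|b|\leq 5$. Orienting and valuing $e$ by $x$ and $f$ by $z$ as above then yields an $(a,b)$-pseudoflow on $D$.

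The only mildly delicate point is the sign bookkeeping for the two half-edges of the negative edge $f$: one must check that $f$ always makes equal contributions to the outflow at its two ends and that every value of $I_5\setminus\{0\}$ is attainable there. This is immediate from the definition of an orientation of a signed graph, so I do not expect any real obstacle.
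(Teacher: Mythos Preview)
Your argument is correct. The computation $a=x+z$, $b=x-z$ with $x,z\in I_5\setminus\{0\}$ is exactly the characterisation one needs, and the bounds $|x|,|z|\le 5$ follow from $|a|,|b|\le 5$ as you note.

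The paper itself gives no proof of this statement: it is labelled an Observation and left to the reader. Your write-up supplies precisely the direct verification the authors had in mind, so there is nothing to compare in terms of approach.
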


The following lemma provides us with information on the types of
pseudoflows that exist in strings. For a graph $G$, we define
$\beta(G)$ as the number of distinct 2-cycles in $G$. Note that
$\beta(G)\geq 1$ for any nontrivial string $G$.

\begin{lemma}\label{l:string}
  Let $G$ be a nontrivial string. Let $a,b\in I_5$ be integers such
  that $a\equiv b\pmod 2$ and $(a,b)$ is valid for $G$. Then $G$
  admits an $(a,b)$-pseudoflow if one of the following conditions
  holds:
  \begin{enumerate}[\quad(a)]
  \item $\beta(G)$ is odd and $a\neq\pm b$,
  \item $\beta(G)$ is even and $a=\pm b$,
  \item $\beta(G) \geq 2$ and either $a$ is odd or $a=0$ or $b=0$.
  \end{enumerate}
\end{lemma}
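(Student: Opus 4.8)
The plan is to argue by induction on the number of $2$-cycles in $G$, building the required pseudoflow by attaching one more link ($K_2^+$ or $D$) at a time to a shorter string and using the pseudoflow composition under series connection. The base case is a string with exactly one $2$-cycle (for~(a)) or with exactly two (for~(b) and (c)); in the one-cycle case the string reduces, after removing $K_2^+$'s at the ends that only force equal inflow/outflow, essentially to a single $D$ flanked by positive edges, and Observation~\ref{obs:digon} gives exactly the $a\neq\pm b$ conclusion. For~(b) with $\beta(G)=2$ and~(c) with $\beta(G)=2$ one checks a handful of small configurations directly.

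For the inductive step I would write $G$ as a series connection $\scon(G_1,G_2)$ where $G_1$ is a shorter nontrivial string and $G_2$ is either $K_2^+$ or a single $D$ (so $\beta(G_2)\in\{0,1\}$ and $\beta(G)=\beta(G_1)+\beta(G_2)$), choosing the split so that the parity of $\beta(G_1)$ is the one I want. A pseudoflow on $G$ decomposes at the identified vertex $w$ into an $(a,c)$-pseudoflow on $G_1$ and a $(c,b)$-pseudoflow on $G_2$, where $c\in I_5$ must have the common parity of $a$ and $b$ and must make both pairs valid. If $G_2=K_2^+$ then necessarily $c=b$ and we just need an $(a,b)$-pseudoflow on $G_1$ with $\beta(G_1)=\beta(G)$ having the same parity as $\beta(G)$; since $K_2^+$ does not create a new $2$-cycle this is only used to lengthen a string, and the hypotheses on $(a,b)$ transfer verbatim. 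The substantive case is $G_2=D$: by Observation~\ref{obs:digon} the values $c$ with $c\neq\pm b$ and $c\equiv b\pmod 2$, $c\in I_5$, are exactly the admissible intermediate values on the $D$-side, and we must choose such a $c$ for which $(a,c)$ satisfies the inductive hypothesis on $G_1$ (note $\beta(G_1)=\beta(G)-1$, so its parity is opposite to that of $\beta(G)$, which is why the roles of~(a) and~(b) swap as we peel off a $D$).

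The heart of the argument is therefore the counting/parity bookkeeping: in case~(a), $\beta(G)$ odd and $a\neq\pm b$, we peel off a $D$, leaving $\beta(G_1)$ even, and must find $c\in I_5$ with $c\equiv b$, $c\neq\pm b$, and $a=\pm c$ — i.e. we want to ``aim'' the $D$ so that the remaining even string can use case~(b); this is possible precisely because $a\neq\pm b$ guarantees at least one of $c=a$, $c=-a$ avoids $\{b,-b\}$, and one checks $c=\pm a\in I_5$ automatically and that validity of $(a,c)$ for $G_1$ holds (here $d$ of the internal terminal is $\ge 2$ since it lies on a $2$-cycle of the longer string). In case~(b), $\beta(G)$ even and $a=\pm b$, we again peel off a $D$ (so $\beta(G_1)$ odd) and need $c$ with $c\neq\pm b=\pm a$, which is easy to arrange inside $I_5$ with the right parity, and then $G_1$ is handled by~(a) with $a\neq\pm c$. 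In case~(c), $\beta(G)\ge 2$ with $a$ odd or $a=0$ or $b=0$, we reduce $\beta$ by one and reuse~(c) until $\beta=2$, where the finitely many base configurations are dispatched by hand; the extra freedom (either endpoint value $0$, or odd values, which have many non-$\pm$ partners in $I_5$) is exactly what keeps a legal $c$ available at every step. The main obstacle I anticipate is not any single deep idea but making sure the chosen intermediate value $c$ simultaneously (i) lies in $I_5$, (ii) has the correct parity, (iii) avoids $\pm b$ when the peeled link is $D$, (iv) keeps the pair $(a,c)$ valid for the shorter string, and (v) lands in the correct case (a)/(b)/(c) for $G_1$; verifying that these five constraints are jointly satisfiable in every parity scenario — and handling the low-$\beta$ base cases and the degenerate situations where $a$ or $b$ equals $0$ (forcing $d(s)\ge2$ or $d(t)\ge2$, hence the relevant endpart is actually a $2$-cycle rather than a single $K_2$) — is the bulk of the work.
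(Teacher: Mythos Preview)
Your inductive peeling approach is sound and will work, but it differs from the paper's argument, and there are two small slips in your sketch worth flagging.

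First, your validity check is not quite right: when you peel $G_2=D$ off the end and let $w$ be the shared terminal, you argue that $(a,c)$ is automatically valid for $G_1$ because $w$ lies on a $2$-cycle of the longer string. But that $2$-cycle is precisely the $D$ you just removed, so $w$ may well have degree~$1$ in $G_1$ (namely when the last component of $G_1$ is $K_2^+$). You therefore genuinely need $c\neq 0$ in that situation; fortunately this is always arrangeable, but it is an extra constraint, not a free consequence. Second, in case~(c) your plan to ``reuse~(c)'' at every step breaks when $b=0$ and $a$ is even nonzero: peeling a $D$ from the target forces $c\neq 0$, and then $(a,c)$ on $G_1$ has neither endpoint zero nor $a$ odd, so (c) does not apply. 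The fix is either to peel from the source end in this sub-case (so the shorter string still has target value $0$), or to fall back on (a)/(b) according to the parity of $\beta(G_1)$.

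The paper proceeds quite differently: it does not induct at all. It observes once and for all that an $(a,b)$-pseudoflow on a string with $2$-cycles $D_1,\dots,D_n$ is equivalent to a sequence $c_1,\dots,c_{n+1}\in I_5$ of common parity with $c_1=a$, $c_{n+1}=b$, $c_i\neq\pm c_{i+1}$ for each $i$, and $c_i\neq 0$ for $1<i<n+1$ (the $c_i$'s are the net flows between consecutive $2$-cycles; Observation~\ref{obs:digon} handles each $D_i$). It then writes down explicit sequences: in cases (a) and (b) the alternating sequence $a,b,a,b,\dots$, and in case (c) the alternating sequence with a single auxiliary value $d$ inserted to fix the parity of the length. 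What your induction buys is a more mechanical reduction with no global sequence to exhibit; what the paper's approach buys is that all the bookkeeping collapses to choosing one or two sequences, with no case analysis on which end to peel from or which of (a)/(b)/(c) to invoke next.
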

\begin{proof}
  Let $n=\beta(G)$ and let the 2-cycles of $G$ be
  $D_1,\dots,D_n$. Orient the positive edges of $G$ so as to obtain a
  directed path from the source to the target terminal. We claim that
  it suffices to find a sequence of numbers $c_1,\dots,c_{n+1}\in
  I_5$, all of the same parity, such that $a=c_1$, $b=c_{n+1}$, $c_i
  \neq \pm c_{i+1}$ for all $i=1,\dots,n$, and $c_i\neq 0$ unless
  $i\in\Setx{1,n+1}$. Given such a sequence, we construct an
  $(a,b)$-pseudoflow on $G$ as follows:
  \begin{itemize}
  \item using Observation~\ref{obs:digon}, we find a
    $(c_i,c_{i+1})$-pseudoflow on each $D_i$, where $1\leq i \leq n$,
  \item we assign flow value $c_{i+1}$ to a bridge joining $D_i$ to
    $D_{i+1}$ if there is one ($1\leq i \leq n$), with respect to the
    fixed orientation,
  \item a bridge incident with a source (target) terminal, if such a
    bridge exists, will be assigned value $a=c_1$ ($b=c_{n+1}$,
    respectively) with respect to the fixed orientation.
  \end{itemize}
  It is not hard to see that this procedure defines an
  $(a,b)$-pseudoflow on $G$. Note that the values associated to any
  bridges incident with terminals are nonzero since the pair $(a,b)$
  is valid for $G$.

  To find a sequence as above, we consider the possible cases one by
  one. In cases (a) and (b), we just take the alternating sequence
  $a,b,a,b,\dots$ of length $n+1$. Consider case (c) and assume first
  that $a$ is odd. Let $d$ be an odd element of $I_5$ such that $\pm
  a\neq d \neq\pm b$. If $n$ is odd, the alternating sequence
  $a,b,\dots$ of length $n+1$ ends with $b$ as required; otherwise, we
  insert $d$ after its first element and delete the last element,
  again obtaining a sequence with the required property.

  To finish the discussion of case (c), assume that $a=0$. Let $d$ be
  an even element of $I_5$ such that $0\neq d \neq\pm b$. Depending on
  the parity of $n$, we take either the alternating sequence
  $0,b,\dots$ of length $n+1$, or the sequence obtained by inserting
  $d$ after the first $0$ and dropping the last element. The case
  $b=0$ is symmetric.
\end{proof}

We use Lemma~\ref{l:string} to obtain a similar result for necklaces:
\begin{lemma}\label{l:necklace}
  Let $G$ be a necklace and $a,b\in I_5$ integers such that $a\equiv b
  \pmod 2$. Then $G$ admits an $(a,b)$-pseudoflow if either $a\neq \pm
  b$, or $a=b=0$ and $\beta(G) \geq 2$.
\end{lemma}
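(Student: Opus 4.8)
The plan is to write $G=\pcon(S_1,S_2)$ with $S_1,S_2$ strings and $S_1$ nontrivial, set $n_i=\beta(S_i)$ (so $n_1\geq 1$ and $\beta(G)=n_1+n_2$), and produce the desired pseudoflow by gluing a pseudoflow on $S_1$ to one on $S_2$. The reduction I would set up is a \emph{gluing principle}: the source and target terminals of $G$ arise by identifying the corresponding terminals of $S_1$ and $S_2$, so each has degree at least $2$ in $G$ and hence $(a,b)$ is automatically valid for $G$; moreover, whenever $a=a_1+a_2$, $b=b_1+b_2$ with $a_1,a_2,b_1,b_2\in I_5$, $a_i\equiv b_i\pmod 2$, and each $S_i$ admits an $(a_i,b_i)$-pseudoflow, the union of the two pseudoflows is an $(a,b)$-pseudoflow on $G$ (flow conservation at the two identified terminals is just the equations $a_1+a_2=a$ and $b_1+b_2=b$). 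So the proof reduces to choosing a good split and then applying, to each $S_i$, Lemma~\ref{l:string} (if $S_i$ is a nontrivial string), Observation~\ref{obs:digon} (if $S_i=D$), or the fact that $K_2^+$ admits an $(x,y)$-pseudoflow exactly when $x=y\neq 0$.

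For the case $a=b=0$ (where the target is a nowhere-zero $6$-flow) I would route a nonzero circulation through the necklace: take $a_2=-a_1$ and $b_2=-b_1$, so that it suffices to exhibit a single pair $(a_1,b_1)$ with $a_1\equiv b_1\pmod 2$, $a_1,b_1\in I_5\setminus\{0\}$, for which both $S_1$ and $S_2$ admit an $(a_1,b_1)$-pseudoflow (a string admits $(a_1,b_1)$ iff it admits $(-a_1,-b_1)$, by negating all flow values). A short case analysis on $(n_1,n_2)$, subject to $n_1\geq 1$ and $n_1+n_2\geq 2$, then shows that $(a_1,b_1)=(1,3)$ works whenever $n_2\geq 1$ (Lemma~\ref{l:string}(a) or (c) for $S_1$ and a nontrivial $S_2$, Observation~\ref{obs:digon} when $S_2=D$), and $(a_1,b_1)=(1,1)$ works when $n_2=0$ — which forces $n_1\geq 2$, so that Lemma~\ref{l:string}(c) applies to $S_1$ and $K_2^+$ trivially carries $(1,1)$. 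The only situation left uncovered is $(n_1,n_2)=(1,0)$, i.e.\ $\beta(G)=1$, and this is exactly what the hypothesis $\beta(G)\geq 2$ excludes (such a $G$ is in fact not flow-admissible), which is why the bound is needed and tight here.

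For the case $a\neq\pm b$ I would again split $a=a_1+a_2$, $b=b_1+b_2$, now choosing all four values nonzero (so that validity for $S_1$ and $S_2$ is automatic) and with matching parities inside each pair. The heuristic is: when $a$ (equivalently $b$) is even, take all four values odd; when $a$ is odd, let one of the two strings carry an odd pair and the other an even pair. Then any string with at least two $2$-cycles is handled directly by Lemma~\ref{l:string}(c); for a string with exactly one $2$-cycle one must additionally arrange $a_i\neq\pm b_i$ and apply Lemma~\ref{l:string}(a) (or Observation~\ref{obs:digon} if it is $D$); and for a copy of $K_2^+$ one must arrange $a_i=b_i$. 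That a split meeting all of this — with every intermediate value remaining in $I_5$ — always exists is checked by going through the few regimes of $(n_1,n_2)$ and the two parities of $a$.

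I expect the bookkeeping in the case $a\neq\pm b$ to be the main obstacle: Lemma~\ref{l:string} offers rather different usable conditions according to whether a string has $0$, $1$, or at least $2$ two-cycles and according to the parity of the flow values, so one must satisfy the constraints of both strings at once while also respecting the parity of $a$ and the window $I_5$; the last point is delicate precisely when $a$ or $b$ equals $\pm 5$, where the sign of the small correction assigned to $S_2$ must be picked so that $a-a_2$ and $b-b_2$ stay in range. The $a=b=0$ case, by contrast, is short once one notices that a single circulation through the two strings suffices. One subtlety to keep in mind throughout is that a terminal of $S_i$ may have degree $1$ within $S_i$ although it has degree $\geq 2$ in $G$; keeping all four flow values at the $S_i$-terminals nonzero is exactly what lets one invoke Lemma~\ref{l:string} without separately verifying validity for the pieces.
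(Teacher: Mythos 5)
Your proposal follows the paper's proof essentially verbatim: decompose the necklace into its two strings, obtain the $(a,b)$-pseudoflow as the sum of an $(a_1,b_1)$-pseudoflow on one string and an $(a_2,b_2)$-pseudoflow on the other, handle $a=b=0$ by a circulation with the very same pairs $(1,3)/(-1,-3)$ and $(1,1)/(-1,-1)$, and settle $a\neq\pm b$ by a finite inspection of admissible splits (which the paper records as an explicit table, indexed by $(a,b)$ up to symmetry and by the parity of $\beta$ of the second string). The only point to watch in the bookkeeping you defer is that the string receiving the even pair (when $a$ is odd) cannot be handled by Lemma~\ref{l:string}(c); one must use (a) or (b) according to whether its number of $2$-cycles is odd or even, which is exactly the type~I/type~II distinction the paper introduces for this purpose.
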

\begin{proof}
  Suppose that $G$ is {a} parallel connection of strings $G_1$ and
  $G_2$. Label the strings in such a way that the following conditions
  hold if possible, in the order of precedence:
  \begin{itemize}
  \item $\beta(G_2) = 0$,
  \item $\beta(G_2)$ is odd.
  \end{itemize}

  We say that $G$ is of \emph{type I} if $\beta(G_2)$ is
  odd.  If $\beta(G_2)$ is
  even, then $G$ is of \emph{type II}.

{Suppose first that $a=b=0$ and $\beta(G)\geq 2$.} If both $\beta(G_1)$ and $\beta(G_2)$ are nonzero,{
  then by Lemma~\ref{l:string} (a) and (c), there is a $(1,3)$-pseudoflow in
  $G_1$ and a} $(-1,-3)$-pseudoflow in $G_2$; their sum is the required $(0,0)$-pseudoflow in $G$. Suppose then that $\beta(G_2) = 0$, which
  means that $\beta(G_1) \geq 2$. By Lemma~\ref{l:string} {(c)}, $G_1$
  admits a $(1,1)$-pseudoflow $f_1$. Since $G_2$ is necessarily the graph $K_2^+$, it admits a $(-1,-1)$-pseudoflow, which we again sum with $f_1$ to obtain a $(0,0)$-pseudoflow in $G$.

{For the rest of the proof suppose} that $a,b\in I_5$ are integers of the same parity such that $a\neq\pm b$. {The desired $(a,b)$-pseudoflow on $G$ will be constructed as a sum of an $(a_1,b_1)$-pseudoflow on $G_1$ and an $(a_2,b_2)$-pseudoflow on $G_2$ for suitable $a_1,b_1,a_2$ and $b_2$.}

{Let us consider possible pseudoflows in $G_1$. Let $a_1,b_1\in I_5$ such
  that $a_1\equiv b_1\pmod 2$. By the choice of $G_2$ and the fact that at least one string of a necklace is nontrivial, $\beta(G_1)\geq 1$. If $\beta(G_1)=1$, by Lemma~\ref{l:string} (a), $G_1$ admits an $(a_1,b_1)$-pseudoflow if $a_1\neq\pm b_1$. If $\beta(G_1)\geq 2$, then, by Lemma~\ref{l:string} (c), $G_1$ admits an $(a_1,b_1)$-pseudoflow if $a_1$ and $b_1$ are odd. In summary, regardless of the type of $G_2$,} $G_1$ admits an $(a_1,b_1)$-pseudoflow if 
  \begin{equation}
    \text{$a_1$ and $b_1$ are odd and $a_1\neq\pm
      b_1$.} \label{eq:first}
  \end{equation}
  Next, we consider pseudoflows in $G_2$. Let $a_2,b_2\in I_5$ such
  that $a_2\equiv b_2\pmod 2$. By Lemma~\ref{l:string} {(a) and (b)}, $G_2$ admits
  an $(a_2,b_2)$-pseudoflow if 
  \begin{equation}
    \label{eq:second}
    \text{either $G$ is of type I and $a_2\neq\pm
      b_2$, or $G$ is of type II and $a_2=\pm b_2$.}
  \end{equation}

  {For each possible $(a,b)$ we now} exhibit a
  choice of $(a_1,b_1)$ and $(a_2,b_2)$ satisfying the conditions
  \eqref{eq:first} and \eqref{eq:second}, respectively, and such that
  $a_1+a_2=a$, $b_1+b_2=b$. The $(a,b)$-pseudoflow {in $G$} will be the
  sum of an $(a_1,b_1)$-pseudoflow in $G_1$ and an
  $(a_2,b_2)$-pseudoflow in $G_2$.

  The choices of $(a_1,b_1)$ and $(a_2,b_2)$ are given in
  Table~\ref{tab:necklaces}. We assume, without loss of generality,
  that $|a|\leq |b|$. By inverting all signs if necessary, we may
  further assume that $a \geq 0$, and if $a=0$, then $b\geq 0$.

  For example, if $(a,b) = (0,2)$ and $G$ is of type II, then the
  table suggests taking $(a_1,b_1) = (1,3)$ and $(a_2,b_2)=(-1,-1)$,
  in accordance with conditions~\eqref{eq:first} and
  \eqref{eq:second}. The rest of the proof is a routine inspection of
  the table.

  \begin{table}
    \centering
    \begin{tabular}{c|cc|cc}
      & type I & & type II & \\
      $a$ & $a_1$ & $a_2$ & $a_1$ & $a_2$\\
      $b$ & $b_1$ & $b_2$ & $b_1$ & $b_2$\\\hline
      $0$ & 1 & $-1$ & 1 & $-1$\\
      $2$ & 5 & $-3$ & 3 & $-1$\\\hline
      $0$ & 3 & $-3$ & 1 & $-1$\\
      $4$ & 5 & $-1$ & 5 & $-1$\\\hline
      $2$ & $-1$ & 3 & 1 & 1\\
      $4$ & 5 & $-1$ & 3 & 1\\\hline
      $2$ & $-1$ & 3 & 1 & 1\\
      $-4$ & $-5$ & 1 & $-5$ & 1\\\hline
      $1$ & 3 & $-2$ & 3 & $-2$\\
      $3$ & $-1$ & 4 & 5 & $-2$\\\hline
      $1$ & 3 & $-2$ & 3 & $-2$\\
      $-3$ & 1 & $-4$ & $-1$ & $-2$\\\hline
      $1$ & 3 & $-2$ & $-1$ & 2\\
      $5$ & 1 & 4 & 3 & 2\\\hline
      $1$ & 3 & $-2$ & 5 & $-4$\\
      $-5$ & $-1$ & $-4$ & $-1$ & $-4$\\\hline
      $3$ & 5 & $-2$ & 1 & 2\\
      $5$ & 1 & 4 & 3 & 2\\\hline
      $3$ & 5 & $-2$ & 5 & $-2$\\
      $-5$ & $-1$ & $-4$ & $-3$ & $-2$
    \end{tabular}
    \caption{The pairs $(a_1,b_1)$ and $(a_2,b_2)$ (the first and second
      column in each field, respectively) 
      for each possible choice of $(a,b)$ and type of the necklace in
      the proof of Lemma~\ref{l:necklace}. The case $(a,b)=(0,0)$ is
      discussed separately.}
    \label{tab:necklaces}
  \end{table}
\end{proof}

\begin{corollary}\label{cor:flow}
  Every flow-admissible string or necklace admits a $(0,0)$-pseudoflow
  (that is, a nowhere-zero $6$-flow).
\end{corollary}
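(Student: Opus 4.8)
The plan is to pin down exactly which strings and necklaces are flow-admissible and then invoke Lemmas~\ref{l:string} and~\ref{l:necklace}. The key preliminary fact I would establish is that in any string or necklace $G$ the negative edges are precisely the negative edges of its $2$-cycles, one per $2$-cycle, so that $G$ has exactly $\beta(G)$ negative edges. For a string this is immediate, since its parts are copies of $K_2^+$ and $D$ and its $2$-cycles are exactly its $D$-parts. For a necklace $G=\pcon(G_1,G_2)$ one must in addition check that the parallel connection creates no new $2$-cycle; this holds because the nontrivial one of $G_1,G_2$ has no edge joining the two terminals. I expect this bookkeeping to be the only real obstacle; everything else is a short case analysis on $\beta(G)$.

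If $\beta(G)=1$, then $G$ has a unique negative edge $e$, which by Theorem~\ref{t:admissible} must lie in a signed circuit. But a balanced cycle through $e$ contains an even, hence at least two, negative edges, and a barbell contains two edge-disjoint unbalanced cycles and hence again at least two negative edges---either way contradicting $\beta(G)=1$. So no flow-admissible string or necklace has $\beta(G)=1$. If $\beta(G)=0$, then $G$ is all-positive: a string with $\beta(G)=0$ is a path of positive edges, each of which is a positive bridge lying in no signed circuit, so such a string is not flow-admissible and there is nothing to prove; a necklace with $\beta(G)=0$ is the parallel connection of two all-positive paths, i.e.\ an all-positive cycle, which behaves like an unsigned series-parallel graph and therefore admits a nowhere-zero $3$-flow---the required $(0,0)$-pseudoflow.

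This leaves the case $\beta(G)\geq 2$. If $G$ is a necklace, Lemma~\ref{l:necklace} with $a=b=0$ immediately yields a $(0,0)$-pseudoflow. If $G$ is a string, I would first check that $(0,0)$ is valid for $G$, that is, that both terminals have degree at least $2$: if the source $s$ had degree $1$, its incident edge $e_s$ would be a bridge, hence would lie in no balanced cycle, and in a barbell $e_s$ would have to lie on the connecting path, forcing the degree-$1$ vertex $s$ to be an endvertex of that path and therefore to lie on one of the two unbalanced cycles---impossible. With $(0,0)$ valid and $\beta(G)\geq 2$, Lemma~\ref{l:string}(c) (applied with $a=0$) gives a $(0,0)$-pseudoflow, i.e.\ a nowhere-zero $6$-flow. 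Combining the three cases proves the corollary.
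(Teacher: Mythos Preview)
Your argument is correct and is essentially the justification the paper leaves implicit: reduce to the case $\beta(G)\geq 2$ by showing that a string or necklace with fewer $2$-cycles is not flow-admissible, and then apply Lemma~\ref{l:string}(c) or Lemma~\ref{l:necklace} with $a=b=0$. One small remark: by the paper's definition a necklace always has at least one nontrivial string, hence $\beta(G)\geq 1$, so your case ``necklace with $\beta(G)=0$'' is vacuous (and your description of it as an all-positive cycle does not match the definition); this does no harm but could be dropped.
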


%.....................................................................

\section{Proof of Theorem~\ref{t:main}}
\label{sec:proof}

Let $G$ be a counterexample to Theorem~\ref{t:main} with minimum
number of edges and, subject to this condition, maximum number of
vertices. By Lemmas~\ref{l:reductions} and~\ref{l:parallel}, $G$ is
reduced, and by Corollary~\ref{cor:flow}, its depth is at least $3$.
Using Lemma~\ref{l:contains}, we may assume that $G$ contains a piece
$H$ that is a necklace. Furthermore, by Corollary~\ref{cor:flow},
$G\neq H$.

We choose $H$ in such a way that $\beta(H)$ is minimized. Recall that
$\beta(H)$ is the number of 2-cycles in $H$, and that $\beta(H)\geq
1$.

\textbf{Case A:} $H$ is not an endpart of $G$. 

We replace $H$ with $D'=\scon(K_2^+,D,K_2^+)$ in $G$, where $D$ is the
unbalanced 2-cycle. Let us call the resulting graph $G'$. The
following lemma provides the last missing piece in our argument.

\begin{lemma}\label{l:admissible}
  If $G$ contains an unbalanced cycle edge-disjoint from $H$, then
  $G'$ is flow-admissible.
\end{lemma}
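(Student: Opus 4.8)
The plan is to apply Theorem~\ref{t:admissible}: I will exhibit a signed circuit of $G'$ through every edge of $G'$. Write $D'=\scon(K_2^+,D,K_2^+)$ with new internal vertices $v_1,v_2$, so that the edges of $G'$ not already in $G$ are the positive edges $e_1=s_Hv_1$ and $e_2=v_2t_H$ together with the two parallel edges joining $v_1$ and $v_2$, which form a copy $D_0$ of the unbalanced $2$-cycle. Put $\hat G=G'-\{v_1,v_2\}$; this is $G$ with all edges and interior vertices of $H$ removed, so $\hat G$ is a common subgraph of $G$ and $G'$ and $E(G')=E(\hat G)\cup E(D')$. Since $H$ is a piece, each interior vertex of $H$ meets only edges of $H$, so the given unbalanced cycle $C$, being edge-disjoint from $H$, avoids the interior of $H$; hence $C$ is a cycle of $\hat G$, and of $G'$. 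Throughout I would use the following dichotomy (verified by looking at the two types of $G$ and at whether $H$ is a whole part of $G$): either (a)~$\hat G$ is connected, or (b)~$G$ is of series type, $H$ is a part of $G$, and $\hat G$ has two components $\hat G_s\ni s_H$ and $\hat G_t\ni t_H$. In case~(b), because $H$ is not an endpart (we are in Case~A), $H$ is an intermediate part, whence $\hat G_s$ contains the first endpart of $G$ and $\hat G_t$ the last; these endparts are unbalanced by Lemma~\ref{l:reductions}(i). Thus in both cases the component of $\hat G$ meeting $s_H$, and also the one meeting $t_H$, contains an unbalanced cycle edge-disjoint from $H$.

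First I would handle the four new edges. Choose an unbalanced cycle $C_s$ of $\hat G$ in the component containing $s_H$ (taking $C_s=C$ in case~(a)) and a shortest path $W_s$ from $s_H$ to $V(C_s)$ in that component; prepending $e_1$ to $W_s$ gives a path of $G'$ from $v_1$ to $V(C_s)$ that is internally disjoint from $C_s$ and from $\{v_1,v_2\}$, so $D_0$, $C_s$, and this path form a barbell of $G'$, and it contains $e_1$ and both parallel edges of $D_0$. Symmetrically, an unbalanced cycle $C_t$ in the component containing $t_H$, a shortest path from $t_H$ to $V(C_t)$, and the edge $e_2$ give a barbell of $G'$ containing $e_2$. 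Hence every edge of $D'$ lies in a signed circuit of $G'$.

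Then I would treat an edge $f\in E(\hat G)$. By Theorem~\ref{t:admissible}, $f$ lies in a signed circuit $\Gamma$ of $G$; if $\Gamma$ avoids $E(H)$ then $\Gamma\subseteq\hat G\subseteq G'$ and we are done. Otherwise, since $H$ attaches to $G$ only at $s_H$ and $t_H$, an analysis of how $\Gamma$ meets $H$ leaves, up to isolated vertices, three possibilities for $\Gamma\cap H$: a single $s_H$--$t_H$ path in $H$; a $2$-cycle of $H$ together with a pendant subpath of $\Gamma$ joining that $2$-cycle to $s_H$ or $t_H$; or a $2$-cycle of $H$ contained in one cycle of the barbell $\Gamma$, while the other cycle $C_2$ of $\Gamma$ meets $H$ in a full $s_H$--$t_H$ path. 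In the first case replace $\Gamma\cap H$ by the $s_H$--$t_H$ path through $D'$ whose middle edge is the edge of $D_0$ of the sign making the replacement carry the same parity of negative edges as $\Gamma\cap H$; in the second case replace the $2$-cycle by $D_0$ and the pendant path by $e_1$ or $e_2$; either way one obtains a signed circuit of $G'$ of the same type still containing $f$. In the third case, all edges of $\Gamma$ outside $H$ lie on the path $C_2\setminus H$, so $f$ lies on this $s_H$--$t_H$ path of $\hat G$, and $C_2\setminus H$ together with an $s_H$--$t_H$ path through $D'$ of matching parity is a balanced cycle of $G'$ through $f$. Having produced a signed circuit through every edge, $G'$ is flow-admissible by Theorem~\ref{t:admissible}.

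I expect the main obstacle to be the dichotomy above, in particular its consequence in case~(b) that \emph{both} $\hat G_s$ and $\hat G_t$ contain an unbalanced cycle: this is exactly where the hypothesis that $H$ is not an endpart, together with the structural properties of the minimum counterexample (via Lemma~\ref{l:reductions}(i)), is essential---without it one of $e_1,e_2$ could be a bridge incident with a leaf of $G'$ and $G'$ would not be flow-admissible. The rerouting of old signed circuits through $D'$ is conceptually routine but needs the bookkeeping sketched above.
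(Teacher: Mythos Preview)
Your overall strategy---verifying Theorem~\ref{t:admissible} edge by edge---matches the paper's, and your treatment of the four edges of $D'$ via the connectivity dichotomy is a clean alternative to the paper's argument (which instead shows that if $G$ were of series type then Lemma~\ref{l:endparts} already supplies a barbell through any edge of $G'$, so one may assume $G$ is parallel and hence $G'$ is $2$-connected). Your dichotomy is correct and its case~(b) is exactly where Lemma~\ref{l:reductions}(i) and the Case~A assumption are needed, as you note.

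The genuine gap is in your classification of $\Gamma\cap H$ for $f\in E(\hat G)$: the three possibilities are not exhaustive. A necklace $H$ contains not only unbalanced $2$-cycles but also unbalanced \emph{long} cycles (one $s_H$--$t_H$ path from each string, choosing an odd number of negative edges from the $2$-cycles). So a barbell $\Gamma$ through $f$ may have $A_1$ equal to such a long cycle, with $Q$ leaving $H$ at $s_H$ or $t_H$ and $A_2$ entirely outside $H$; then $\Gamma\cap H=A_1$, which is none of your three shapes. Similarly, both $A_1$ and $A_2$ can be $2$-cycles of $H$ lying on different strings, with $Q$ exiting $H$ at $s_H$, traversing an $s_H$--$t_H$ path of $\hat G$ containing $f$, and re-entering at $t_H$; here $\Gamma\cap H$ consists of two $2$-cycles plus two pendant paths. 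Both missing cases are easy to dispatch with your own tools (replace the long cycle by $D_0$ and prepend $e_1$ or $e_2$; or close the external $s_H$--$t_H$ subpath of $Q$ through $D'$ with matching parity), but the claim that only three shapes occur is false as stated.

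The paper sidesteps this taxonomy by first disposing of the ``$e$-detour'' case (an $s_H$--$t_H$ path in $\hat G$ through $f$ immediately yields a balanced cycle in $G'$); once no $e$-detour exists, \emph{every} cycle of $G$ through $f$ is edge-disjoint from $H$, which sharply limits how a barbell through $f$ can meet $H$ and makes the residual case analysis shorter. You may find it cleaner to adopt that reduction rather than patch the enumeration.
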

\begin{proof}
  Let $u'$ and $v'$ be the source and target terminal, respectively,
  of the necklace $H$ (as well as of the graph $D'$). Let $D_0$ denote
  the 2-cycle in $D'$. Consider an arbitrary edge $e$ of $G'$. We need
  to show that $e$ is contained in a signed circuit of $G'$. Suppose
  the contrary.

  \textbf{Case 1:} $e \notin E(D')$.

  We first observe that $G$ contains no $u'v'$-path that contains $e$
  and is vertex-disjoint from $H$ except for its endvertices (let us
  call such a path an \emph{$e$-detour}). Indeed, combining such a
  path with one of the two $u'v'$-paths in $D'$ would provide us with
  a balanced cycle {of $G'$} containing $e$.

  In particular, each cycle of $G$ containing $e$ is edge-disjoint
  from $H$. Since $e$ is not contained in any signed circuit of $G'$,
  any such cycle must be unbalanced. On the other hand, since $G$ is
  flow-admissible, $e$ is contained in a signed circuit $B$ of $G$,
  which must therefore be a barbell. {Clearly, $B$ is not
    edge-disjoint from $H$, for otherwise $B$ is a signed circuit
    containing $e$ in $G'$.} Let $A_1$ and $A_2$ be the unbalanced
  cycles in $B$ and let $Q$ be the path connecting them.
  
{If $e$ belongs to an unbalanced cycle of $B$, say $A_1$, then $A_1$ is edge-disjoint from $H$ in $G$, and thus also from $D'$ in $G'$. Since $G'$ is connected, there exists a path connecting $A_1$ and $D_0$ and therefore also a barbell of $G'$ containing $e$, a contradiction. Hence we can assume that $e\in Q$.}

Note first that $A_1$ and $A_2$ are not both edge-disjoint from $H$,
  for otherwise {$Q$} contains both $u'$ and $v'$, and replacing the
  part of {$Q$} inside $H$ with a $u'v'$-path in $D'$ yields a barbell in $G'$ containing $e$.

  Suppose then that $A_1$ contains an edge of $H$. 
  %Note that $A_1$ does not contain $e$, for otherwise a subpath of $A_1$ would be an $e$-detour.
  We claim that $A_2$ is edge-disjoint from $H$. For the sake of a
  contradiction, assume that $A_2$ contains an edge of $H$. If both
  $A_1$ and $A_2$ were contained in $H$, then a subpath of $Q$ would
  be an $e$-detour. On the other hand, at least one $A_i$ has to be
  contained in $H$, for otherwise they both contain $u'$ and $v'$,
  violating the definition of a barbell.

  We may thus assume that $A_1$ is contained in $H$ and $A_2$ is
  not. Since $A_2$ contains both terminals of $H$, we have $Q\subseteq
  H$, {which is a contradiction, since $e\notin E(D')$. Therefore $A_2$ is edge-disjoint from $H$ as claimed. Let us} choose a shortest path $P$ in $Q\cup A_1$ connecting
  $A_2$ to a terminal of $H$; the union of $P$, $A_2$, $D_0$ and an
  edge of $D'$ connecting $D_0$ to an endvertex of $P$ is then a
  barbell in $G'$ containing $e$. This finishes the discussion of
  Case~1.

  \textbf{Case 2:} $e \in E(D')$.

  We show that $G$ is of parallel type. Otherwise, it would be of
  series type, its endparts would be unbalanced
  (Lemma~\ref{l:reductions}(i)) and by Lemma~\ref{l:endparts}, $e$
  would be contained in a barbell, which is {a contradiction}.

  The graph $G'$ is also of parallel type. This is clear if $H$ is a
  (proper) subgraph of one of the parts of $G$. Otherwise, the two
  strings forming $H$ are parts of $G$, and by the assumption that $G$
  contains an unbalanced cycle {edge-disjoint from $H$}, there are
  some more parts. Then $G'$ is the parallel connection of these parts
  with $D'$.

  By symmetry, we may assume that $e$ is not incident with the
  terminal $v'$ of $D'$. Let $A$ be an unbalanced cycle in $G$
  edge-disjoint from $D'$. Since $G'$ is 2-connected, it contains a
  path $R$ joining $u'$ to $A$ and avoiding $v'$. The union of $D_0$,
  $R$, $A$ and the edge connecting $D_0$ to $u'$ is a barbell
  containing $e$. This concludes the proof. 
\end{proof}

We claim that there is indeed an unbalanced cycle of $G$ that is
edge-disjoint from $H$, as required in Lemma~\ref{l:admissible}. {This is clear if $G$ is of series type, because every endpart of $G$ is unbalanced (Lemma~\ref{l:reductions}(i)).}

Let $G$ be a parallel connection of its parts $H_1,\dots,H_k$. The
necklace $H$ is either a union of two strings $H_i\cup H_j$ for some
$i$ and $j$, or it is a proper subgraph of some $H_i$. Suppose first
the former --- say, $H=H_1\cup H_2$. Then $k\geq 3$, because $G\neq
H$. If there exists $i\in\{3,\ldots,k\}$ such that $H_i$ contains an
unbalanced cycle, we are done.  Since $G$ is reduced, each $H_i$ is
reduced as well, and thus every $H_i$ is a signed $K_2$. If there
exist $H_i$ and $H_j$ ($i,j\in\{3,\ldots,k\}$) such that they have
opposite signs, then $H_i\cup H_j$ is the sought unbalanced cycle
edge-disjoint from $H$. We conclude that $k=3$, because $G$ is reduced
and it does not contain parallel edges of the same sign. If neither
$H_1$ nor $H_2$ is a $K_2^+$, then $H_1\cup H_3$ is a necklace of $G$
with $\beta(H_1\cup H_3)<\beta(H)$, which is a contradiction with the
choice of $H$. On the other hand, if $H$ contains a string that is a
$K_2^+$, then the string forms an unbalanced 2-cycle with $H_3$ (as $G$ is
reduced), and $G$ is a necklace, which is {a contradiction with}
Corollary~\ref{cor:flow}.

Suppose now that $H$ is a proper subgraph of one of the parts of $G$,
say $H_1$. By a similar argument as above we conclude that $k=2$ and
$H_2$ is a signed $K_2$. Moreover $H_1$ is a series connection of
(reduced) graphs $H_{11},\ldots, H_{1q}$ for some $q$. If any of the
graphs $H_{1i}$ that does not contain $H$ contains an unbalanced
cycle, we are done. Therefore each $H_{1i}$ that does not contain $H$
is a signed $K_2$. Since $H$ is a proper subgraph of $H_1$, we
conclude that at least one of the terminals of $G$ must be of degree
2. But the terminal is not contained in an unbalanced 2-cycle, which
is a contradiction with Lemma~\ref{l:reductions} (c). This proves the
claim that $G$ satisfies the hypothesis of Lemma~\ref{l:admissible}
and the graph $G'$ is flow-admissible.

The graph $D'$, used to obtain $G'$, has
four edges. All necklaces have at least four edges, with
$\pcon(K_2^+,\scon(K_2^+,D))$ and its mirror image being the only ones
with exactly four. These necklaces, however, have one vertex fewer
than $D'$, so the choice of $G$ implies that $G'$ admits a
nowhere-zero $6$-flow $\varphi$.

The restriction of $\varphi$ to $D_0$ (the 2-cycle in $D'$) is an
$(a,b)$-pseudoflow for some $a$ and $b$. By
Observation~\ref{obs:digon}, $a\equiv b\pmod 2$ and $a\neq\pm
b$. Furthermore, since the values of $\varphi$ on the edges of
$D'-E(D_0)$ are $\pm a$ and $\pm b$, we find that $a,b$ are nonzero
elements of $I_5$. By Lemma~\ref{l:necklace}, $H$ admits an
$(a,b)$-pseudoflow. Combining it with a restriction of $\varphi$, we
obtain a nowhere-zero $6$-flow in $G$, contradicting the hypothesis.

\textbf{Case B:} $H$ is an endpart of $G$. 

The argument is similar {to the argument of Case A}, except that
we use the graph $\scon(K_2^+,D)$ or $\scon(D,K_2^+)$ in place of $D'$
(so as to obtain a flow-admissible graph) and invoke an analogue of
Lemma~\ref{l:admissible}. We arrive at a similar contradiction, which
concludes the proof.

%.....................................................................

% \section*{Acknowledgments}

% We gratefully acknowledge support from project GA14-19503S of the
% Czech Science Foundation, as well as partial support for the second
% author by APVV, Project 0223-10 (Slovakia). The second author was
% supported by the project NEXLIZ --- CZ.1.07/2.3.00/30.0038, which is
% co-financed by the European Social Fund and the state budget of the
% Czech Republic.

%%%%%%%%%%%%%%%%%%%%%%%%%%%%%%%%%%%%%%%%%%%%%%%%%%%%%%%%%%%%%%%%%%%%%%

\end{document}